\documentclass[preprint,12pt]{elsarticle}
\usepackage[T1]{fontenc}
\usepackage[utf8]{inputenc}
\usepackage[english]{babel}
\usepackage{lmodern,microtype}
\usepackage{amsmath,amssymb,amsthm,mathtools}
\usepackage{needspace,etoolbox}
\usepackage{xurl,hyperref,bookmark}
\hypersetup{unicode=true,hidelinks,
 pdftitle={A proof of Kahane's conjecture in the Wiener algebra},
 pdfauthor={Przemys{\l}aw Ohrysko},
 pdfsubject={Harmonic analysis and the Wiener algebra},
 pdfkeywords={Wiener algebra, Kahane conjecture, Beurling--Helson theorem, norms of powers, non-harmonic Fourier series},
 bookmarksnumbered=true}
\allowdisplaybreaks[1]
\numberwithin{equation}{section}
\newtheorem{theorem}{Theorem}[section]
\newtheorem{lemma}[theorem]{Lemma}
\newtheorem{proposition}[theorem]{Proposition}
\newtheorem{corollary}[theorem]{Corollary}
\theoremstyle{remark}
\newtheorem{remark}[theorem]{Remark}
\newtheorem{example}[theorem]{Example}
\newcommand{\T}{\mathbb T}
\newcommand{\R}{\mathbb R}
\newcommand{\C}{\mathbb C}
\newcommand{\Z}{\mathbb Z}
\newcommand{\N}{\mathbb N}
\newcommand{\dd}{\,\mathrm d}
\newcommand{\norm}[1]{\left\lVert#1\right\rVert}

\BeforeBeginEnvironment{theorem}{\Needspace{5\baselineskip}}
\BeforeBeginEnvironment{lemma}{\Needspace{5\baselineskip}}
\BeforeBeginEnvironment{corollary}{\Needspace{5\baselineskip}}
\BeforeBeginEnvironment{proposition}{\Needspace{5\baselineskip}}
\pretocmd{\section}{\Needspace{8\baselineskip}}{}{}
\pretocmd{\subsection}{\Needspace{8\baselineskip}}{}{}
\makeatletter
\def\ps@pprintTitle{%
  \let\@oddhead\@empty
  \let\@evenhead\@empty
  \def\@oddfoot{\footnotesize\itshape Preprint\hfill October 10, 2026}%
  \let\@evenfoot\@oddfoot}
\makeatother

\begin{document}
\begin{frontmatter}
\title{A proof of Kahane's conjecture in the Wiener algebra}
\author[uw]{Przemys{\l}aw Ohrysko\corref{cor1}}
\cortext[cor1]{Corresponding author.}
\ead{P.Ohrysko@mimuw.edu.pl}
\address[uw]{University of Warsaw, Faculty of Mathematics, Informatics and Mechanics, Institute of Mathematics, Banacha 2, 02-097 Warsaw, Poland}

\begin{abstract}
We prove Kahane's conjecture: if a continuous unimodular function
$F$ on the circle satisfies
$\norm{F^n}_{A(\T)}=o(\log|n|)$ as $|n|\to\infty$, $n\in\Z$, then
$F(x)=e^{i(mx+b)}$ for some $m\in\Z$ and $b\in\R$.
The main step is a logarithmic lower bound for averages:
for every nonconstant real-valued $\varphi\in A(\T)$,
\[
 \frac1T\int_a^{a+T}\norm{e^{it\varphi}}_{A(\T)}\dd t
 \ge c_\varphi\log T
 \qquad(a\in\R,\ T\ge T_\varphi),
\]
where $c_\varphi>0$ and $T_\varphi>1$ are independent of $a$.
The proof uses exact sampling on finite subgroups of the circle,
uniformly bounded localized modulations, and the weighted non-harmonic
Littlewood inequality of Jaming--Kellay--Saba. A suitable cutoff and
a small linear perturbation of the phase ensure that a fixed
positive proportion of the coefficients at the beginning of the
frequency ordering have modulus one. Passing to integer powers and removing the degree
of the circle map yield Kahane's conjecture without any additional
smoothness assumptions. We also give equivalent phase, convolution,
and operator formulations, and derive consequences for homomorphisms
of weighted Wiener algebras and for higher-dimensional tori.
\end{abstract}

\begin{keyword}
Wiener algebra \sep Kahane's conjecture \sep Beurling--Helson theorem \sep norms of powers \sep non-harmonic Fourier series
\MSC[2020] Primary 42A20 \sep Secondary 42A05, 43A22, 46J10
\end{keyword}
\end{frontmatter}

\section{Historical introduction and main results}\label{sec:intro}

In 1953, Beurling and Helson~\cite{BH} proved that uniform boundedness
of the norms of powers in the Wiener algebra forces a circle map
to have a very rigid form. To recall their theorem, identify
$\T$ with $\R/(2\pi\Z)$ and let $A(\T)$ denote the algebra of
functions with absolutely convergent Fourier series, equipped with
the norm
\[
 \norm f_A=\sum_{k\in\Z}|\widehat f(k)|,
 \qquad
 \widehat f(k)=\frac1{2\pi}\int_0^{2\pi}f(x)e^{-ikx}\dd x.
\]
In its circle formulation, the Beurling--Helson theorem states that
if $F\in C(\T)$, $|F|=1$, and
\begin{equation}\label{eq:BH}
 \sup_{n\in\Z}\norm{F^n}_A<\infty,
\end{equation}
then $F(x)=e^{i(mx+b)}$ for some $m\in\Z$, $b\in\R$.
In other words, uniform boundedness of the norms of powers
characterizes affine maps $\T\to\T$.

Kahane proposed replacing boundedness in~\eqref{eq:BH} by
the condition
\begin{equation}\label{eq:kahane-condition}
 \norm{F^n}_A=o(\log|n|)\qquad(|n|\to\infty,\ n\in\Z).
\end{equation}
This question appears in his address to the International Congress
of Mathematicians in Stockholm in 1962, published in
1963~\cite{KahaneICM,LebedevBH}; see also his monograph~\cite[p.~87]{Kahane}.
The logarithmic scale is natural, and the condition cannot be replaced
by $O(\log|n|)$: for continuous piecewise affine circle maps that are
not globally affine, the norms of powers have order $\log|n|$;
see~\cite[Ch.~VI, \S2, pp.~75--76]{Kahane} and~\cite{LebedevBH}.
We give a concrete example in Section~\ref{sec:remarks}.

It is useful to distinguish two directions in the study of the
quantitative problem. The first concerns estimates that depend on
regularity of the phase. In~\cite{LebedevSmooth}, Lebedev studied
the growth of $\norm{e^{it\varphi}}_{A_p}$, $1\le p<2$, for real-valued
$C^1$ phases, where $\norm f_{A_p}=\norm{\widehat f}_{\ell^p}$.
The second concerns Kahane's conjecture under the sole assumption
that the circle map is continuous. In this direction,
Lebedev~\cite{LebedevBH} obtained the first general quantitative
strengthening of the classical theorem in 2012: it suffices to assume
\begin{equation}\label{eq:lebedev-bound}
 \norm{F^n}_A
 =o\!\left(\left(\frac{\log\log|n|}{\log\log\log|n|}\right)^{1/12}\right).
\end{equation}
His proof combines Diophantine approximation with analysis on finite
cyclic groups and uses the result of Green and Konyagin~\cite{GK}
on the Littlewood problem modulo a prime.

Konyagin and Shkredov~\cite{KS} refined this argument.
Their 2015 theorem gives the same conclusion under the assumption
\begin{equation}\label{eq:KS-bound}
 \norm{F^n}_A
 =o\!\left(\frac{(\log|n|)^{1/22}}{(\log\log|n|)^{3/11}}\right).
\end{equation}
Further progress was made by Sanders~\cite[Theorem~1.4]{Sanders} in a preprint
made available in April 2026. Sanders proved that it is enough to have
\begin{equation}\label{eq:sanders-bound}
 \norm{F^n}_A=O\!\left((\log|n|)^{1/8-\varepsilon}\right)
 \quad\text{for some }0<\varepsilon<\tfrac18.
\end{equation}
In~\eqref{eq:lebedev-bound}--\eqref{eq:sanders-bound}, the limit is
$|n|\to\infty$, $n\in\Z$.
All these results apply to continuous maps without additional
differentiability assumptions. These sufficient conditions still fall short of the threshold $o(\log|n|)$
proposed by Kahane.

The purpose of this paper is to reach that threshold.
For a continuous function outside $A(\T)$, we set
$\norm f_A=+\infty$; in particular, condition~\eqref{eq:kahane-condition}
requires that $F^n$ belong to $A(\T)$ for all sufficiently large $|n|$.

\begin{theorem}[Kahane's conjecture]\label{thm:kahane}
Let $F\in C(\T)$ satisfy $|F(x)|=1$ for every $x\in\T$.
If
\[
 \norm{F^n}_A=o(\log|n|)\qquad(|n|\to\infty,\ n\in\Z),
\]
then there exist $m\in\Z$ and $b\in\R$ such that
$F(x)=e^{i(mx+b)}$ for all $x\in\T$.
\end{theorem}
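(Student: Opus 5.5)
Following the strategy announced in the abstract, the plan is to reduce Theorem~\ref{thm:kahane} to a logarithmic lower bound for averages of $\norm{e^{it\varphi}}_A$ over long windows of real $t$, and then pass from real $t$ to integer powers.

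\emph{Reduction to a real phase.} Since $F$ is continuous and unimodular, it has a degree $m\in\Z$ and a continuous real $\varphi$ on $\T$ with $F(x)=e^{imx}e^{i\varphi(x)}$. Multiplication by $e^{imnx}$ only shifts Fourier coefficients, so $\norm{F^n}_A=\norm{e^{in\varphi}}_A$. For large $n$ the hypothesis forces $e^{in\varphi}\in A(\T)$. From this we deduce $\varphi\in A(\T)$ (after adding a constant): on short arcs take a continuous logarithm of $e^{in\varphi}$ via the local Wiener--L\'evy theorem, which gives $n\varphi$ locally in $A$, and patch with a partition of unity. It therefore suffices to show that $\varphi$ is constant. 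We argue by contradiction and assume $\varphi$ is nonconstant.

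\emph{Main analytic step: the average bound.} We would prove
\[
 \frac1T\int_a^{a+T}\norm{e^{it\varphi}}_A\dd t\ge c_\varphi\log T
\]
uniformly in $a$. Steps:
\begin{itemize}
\item[(i)] Choose an interval on which $\varphi$ is not constant. Multiply by a smooth cutoff $\chi$, which costs only a constant factor in the $A$-norm, and localize there.
\item[(ii)] Sample on a finite subgroup $\{2\pi j/N\}$ with $N$ large. By aliasing, $\norm{f}_A$ dominates the $\ell^1$ norm of the DFT coefficients, so it suffices to bound discrete sums from below.
\item[(iii)] Write $\int_a^{a+T}|\widehat{\chi e^{it\varphi}}(k)|\dd t$ as an average of a non-harmonic exponential sum $\sum_j c_j e^{it\lambda_j}$, with frequencies $\lambda_j=\varphi(x_j)$ and weights coming from the cutoff and the sampling.
\item[(iv)] Apply the weighted non-harmonic Littlewood inequality of Jaming--Kellay--Saba. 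It gives a lower bound of the form $c\sum_j |c_j|/j$ for the $L^1$ average over an interval of length $T$, provided the frequencies are suitably separated, with separation comparable to $1/T$.
\end{itemize}
This yields $\gtrsim\log T$ once a fixed positive proportion of the first $\asymp T$ coefficients in the frequency ordering have modulus one.

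\emph{Expected obstacle.} Ensuring this proportion is where I expect the difficulty. Values $\varphi(x_j)$ may cluster or coincide, which destroys separation, and the weights may be small. The remedy is to perturb the phase to $\varphi+\varepsilon x$ on the localized arc; this is a localized modulation whose $A$-norm cost is uniformly bounded. The perturbation makes the sampled values distinct and well spread, and it lets the cutoff be chosen so that the early coefficients equal one. Uniformity in $a$ follows because shifting $t$ multiplies the coefficients by unimodular factors, which leaves their moduli unchanged.

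\emph{Passage to integers.} We need to transfer the bound from real $t$ to integer $n$. For $t\in[n,n+1]$,
\[
 \norm{e^{it\varphi}}_A\le\norm{e^{in\varphi}}_A\sup_{s\in[0,1]}\norm{e^{is\varphi}}_A\le C_\varphi\norm{e^{in\varphi}}_A,
\]
where the supremum is finite since $\varphi\in A$. Hence
\[
 \frac1T\sum_{a\le n<a+T}\norm{e^{in\varphi}}_A\gtrsim\log T.
\]
Take $a=T$, so that all $n$ in the window satisfy $\log n\asymp\log T$. The hypothesis gives $\norm{e^{in\varphi}}_A\le\varepsilon\log n\le2\varepsilon\log T$ for all $n$ in the window, which contradicts the lower bound once $\varepsilon<c_\varphi/(2C_\varphi)$. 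Therefore $\varphi$ is constant, and $F(x)=e^{i(mx+b)}$.
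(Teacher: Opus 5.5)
You have the paper's overall architecture: degree removal, sampling on $\{2\pi j/N\}$, a uniformly bounded localized modulation, the Jaming--Kellay--Saba inequality, and passage to integer windows. Your reduction via local Wiener--L\'evy (instead of the paper's spectral-radius logarithm) and the closing argument over the window $[T,2T)$ are fine. There is, however, a genuine gap at exactly the step you flag. Nothing in the plan makes the \emph{initial} coefficients in the frequency ordering equal to one, and the localization in step (i) can prevent it.

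First, the condition has to concern an initial block of the ordering (or positive density at every scale). A proportion $\rho$ of ones placed at indices in $[(1-\rho)R,R]$ contributes only $\log\frac1{1-\rho}$ to $\sum_r|z_r|/(r+1)$. Second, suppose you cut off to an arbitrary arc $I$ on which $\varphi$ is nonconstant, with $\chi=1$ except on transition zones of length $\eta$ at the ends. If $\varphi$ is concave on $I$, then so is $\varphi+\varepsilon x$ for \emph{every} $\varepsilon$, so its sublevel sets in $I$ are neighbourhoods of the endpoints. Consequently the $r$-th smallest sampled value lies within distance $\lesssim r/N$ of an end, where $\chi=O(r/N)$, and the points with $\chi=1$ first appear at index $\gtrsim\eta N$. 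This gives $\sum_r|z_r|/(r+1)=O(1+\log(1/\eta))$ uniformly in $N$, so JKS yields no logarithm. The perturbation therefore does not ``let the cutoff be chosen'' appropriately; it only separates frequencies.

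The missing idea is to choose the cutoff according to the \emph{levels} of $\varphi$ rather than where it varies:
\begin{enumerate}
\item[(1)] Normalize $\min\varphi=0$ and $\max\varphi=1$, with the maximum at the point $0\equiv2\pi$ where the coordinate $x$ jumps.
\item[(2)] Take $\chi\in C_c^\infty((0,2\pi))$ equal to $1$ off a neighbourhood of $0$ on which $\varphi\ge3/4$.
\item[(3)] Let $J$ be an open interval on which $\varphi<1/4$.
\item[(4)] Restrict the perturbation to $|s|\le1/(16\pi)$, so that $|sx|\le1/8$.
\end{enumerate}
Then every sample in $J$ has frequency $<3/8$, and every sample with $\chi<1$ has frequency $\ge5/8$. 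Hence the first $\#\{j:x_j\in J\}\ge\rho N$ coefficients have modulus one, and $\sum_{r\le\rho N}1/(r+1)\ge\log N-C$.

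A smaller correction concerns separation: a single linear perturbation does not make the samples ``well spread.'' The bad parameters for a pair $(j,\ell)$ form an interval of length $2\delta/|x_j-x_\ell|$, and the union bound only produces some $s_N$ in the small window with separation $\delta_N=1/(16N^3)$. So $s_N$ must vary with $N$, which is why uniformity of the modulation bound in $s$ and $ts$ matters. It also forces $N\asymp T^{1/3}$ rather than ``the first $\asymp T$ coefficients.'' This last point is harmless, since $\log N\asymp\log T$.
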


Under the assumption $|F|=1$, the two-sided condition is equivalent
to $\norm{F^n}_A=o(\log n)$ as $n\to+\infty$, $n\in\N$.
Indeed, complex conjugation preserves the $A(\T)$ norm, since
$\widehat{\overline g}(k)=\overline{\widehat g(-k)}$, and hence
\[
 F^{-n}=\overline{F^n},\qquad
 \norm{F^{-n}}_A=\norm{F^n}_A\qquad(n\in\Z).
\]
Thus it is enough to consider positive powers in the proof,
as in Konyagin and Shkredov~\cite[Section~4]{KS}.
We retain the standard formulation with $n\in\Z$ and $\log|n|$
in the statement.

Our proof does not establish a pointwise lower bound for each
individual norm $\norm{F^n}_A$. Instead, we prove a lower bound
for averages, uniform in the position of the averaging interval.
For a real-valued $\varphi\in A(\T)$, write
\begin{equation}\label{eq:M-Q}
 M_\varphi(t)=\norm{e^{it\varphi}}_A,
 \qquad
 \mathcal Q_\varphi(a,T)=\frac1T\int_a^{a+T}M_\varphi(t)\dd t.
\end{equation}
The exponential series in the Banach algebra ensures that
$M_\varphi$ is finite and continuous on $\R$.

\begin{theorem}[Logarithmic lower bound for averages]\label{thm:mean}
For every nonconstant real-valued $\varphi\in A(\T)$, there exist
$c_\varphi>0$ and $T_\varphi>1$ such that
\begin{equation}\label{eq:mean-statement}
 \mathcal Q_\varphi(a,T)\ge c_\varphi\log T
 \qquad(a\in\R,\ T\ge T_\varphi).
\end{equation}
\end{theorem}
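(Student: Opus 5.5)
We follow the route indicated in the abstract: discretize, localize, and apply a weighted non-harmonic Littlewood inequality in the time variable $t$.

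\emph{Step 1: exact sampling.} Fix a large $N$ and consider the finite subgroup $G_N=\{2\pi j/N\}\subset\T$. For any $f\in A(\T)$ the aliased coefficients
\[
\widehat{f}_N(k)=\frac1N\sum_{j}f(2\pi j/N)e^{-2\pi ijk/N}=\sum_{l\equiv k\ (N)}\widehat f(l)
\]
satisfy $\sum_{k\in\Z/N}|\widehat f_N(k)|\le\norm f_A$. Apply this to $f_t=e^{it\varphi}$ multiplied by a localized modulation: a bump $\chi$ supported on a short arc $I$, built from Fejér-type kernels so that $\norm{\chi}_A$ is bounded uniformly in the size of $I$. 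This gives
\[
M_\varphi(t)\gtrsim \sum_k\Bigl|\frac1N\sum_{x_j\in I}\chi(x_j)e^{-ix_jk}e^{it\varphi(x_j)}\Bigr|.
\]
For each fixed $k$, the inner sum is a non-harmonic exponential sum in $t$ with frequencies $\lambda_j=\varphi(x_j)$ and coefficients $c_j=\chi(x_j)e^{-ix_jk}/N$.

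\emph{Step 2: the Littlewood inequality.} Average over $t\in[a,a+T]$ and exchange the sum and the integral. For each $k$, the Jaming--Kellay--Saba weighted non-harmonic Littlewood inequality gives
\[
\frac1T\int_a^{a+T}\Bigl|\sum_j c_je^{it\lambda_j}\Bigr|\dd t\ \gtrsim\ \sum_{r}\frac{|c_{j_r}|}{r},
\]
where the frequencies are ordered increasingly and are $T^{-1}$-separated. The constant is uniform in $a$, because translating $t$ only changes the $c_j$ by unimodular factors. To obtain $\log T$, we need about $T^{\delta}$ frequencies, separated by $\gtrsim 1/T$, whose coefficients at the start of the frequency ordering are comparable to one after normalization. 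Summing over the $k$ in one residue class produces the normalization factor $N$, which cancels the $1/N$.

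\emph{Step 3: arranging separation and unimodular coefficients (main obstacle).} The values $\varphi(x_j)$ may coincide or cluster, since $\varphi$ is only continuous and may be flat. We handle this in two ways. First, we replace $\varphi$ by $\varphi+\varepsilon x$ on a local chart; the linear term can be absorbed by modulation, since $e^{it\varepsilon x}$ is not periodic, so we take $\varepsilon t\in\Z$ or use the localized modulation. This perturbation makes the sampled values strictly monotone on an arc where $\varphi$ is nonconstant. Second, we cut off by choosing $I$ so that $\varphi(I)$ has length $\asymp 1$, take $N\asymp T$, and keep only those $x_j$ whose values are $1/T$-separated. We expect the crux to be showing that a fixed positive proportion of the first frequencies in the ordering carry coefficients of modulus one, uniformly in $T$ and $a$. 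The plan is to select, by pigeonholing over a dyadic family of arcs, an arc on which $\varphi$ has oscillation comparable to its length scale. On such an arc, $\asymp T$ samples land in distinct $1/T$-cells. Then $\sum_{r\le cT}1/r\asymp\log T$ gives $\mathcal Q_\varphi(a,T)\ge c_\varphi\log T$ for $T\ge T_\varphi$.
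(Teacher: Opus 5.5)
Your Steps~1--2 match the paper's architecture: exact sampling on $\Z/N\Z$, a cutoff with bounded $A$-norm, the Jaming--Kellay--Saba inequality in $t$ for each $k$, summation over the $N$ values of $k$ to cancel $1/N$, and uniformity in $a$ via unimodular factors. The gap is in Step~3, which carries the whole argument, and both of its mechanisms fail as stated.

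\emph{Separation.} Adding $\varepsilon x$ does not make the sampled values monotone, since a small linear perturbation of a non-monotone phase is still non-monotone. An arc on which the oscillation is comparable to the length also does not put the samples into distinct $1/T$-cells: the phase may rise and fall back, or be flat on most of the arc. Nor can you ``keep only'' the separated $x_j$. Theorem~\ref{thm:JKS} needs \emph{every} consecutive gap in the sum to be at least $\delta$, while the sampling inequality involves every grid point where $\chi\neq0$. Discarding a subset $E\subset\Z/N\Z$ costs the factor $\norm{1_E}_{A(\Z/N\Z)}$, which is unbounded in general (already of order $\log|E|$ for intervals); this is the finite Littlewood problem the method is meant to avoid. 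The paper instead fixes $N$ and chooses one slope $s_N\in[-\tfrac1{16\pi},\tfrac1{16\pi}]$ outside the bad sets $\{s:|\varphi(x_j)-\varphi(x_\ell)-s(x_j-x_\ell)|<\delta\}$, whose total length is at most $\delta N^3/\pi$. With $\delta=1/(16N^3)$, \emph{all} $N$ values $\varphi(x_j)-s_Nx_j$ are separated. This forces $T\ge2304\pi N^3$, i.e.\ $N\asymp T^{1/3}$ rather than $N\asymp T$, which costs only a factor $1/3$ in $\log T$. The slope stays fixed throughout the $t$-integration and is absorbed by $\sup_y\norm{\chi e^{-iyx}}_A<\infty$ (Lemma~\ref{lem:modulation}). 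Your alternative ``$\varepsilon t\in\Z$'' cannot hold for all $t$ in an interval.

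\emph{Initial coefficients.} You correctly identify the crux --- modulus-one coefficients must sit at the \emph{beginning} of the frequency ordering --- but give no mechanism for it. A bump on a short arc works against it: the smallest values of $\varphi$ on $I$ may occur where $\chi$ is small, and modulus-one coefficients at positions $r\asymp N$ contribute only $O(1)$ to $\sum_r|z_r|/(r+1)$. The paper ties a global cutoff to the level sets of the phase. Normalize $\min\varphi=0$ and $\max\varphi=\varphi(0)=1$. Take $\chi\in C_c^\infty((0,2\pi))$ with $\chi(x)<1\Rightarrow\varphi(x)\ge3/4$, and an open interval $J$ on which $\varphi<1/4$. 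Since $|s_Nx|\le1/8$ on $[0,2\pi]$, every sampled value below $3/8$ precedes every value with $\chi<1$; this includes the at least $\rho N$ grid points in $J$. Hence the first $\rho N$ coefficients have modulus one, and $\sum_{r\le\rho N}1/(r+1)\ge\log N-C$. Without these two ingredients your outline does not yield the $\log T$ bound.
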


No smoothness assumption on the phase is needed in this theorem.
It also yields a uniform estimate for averages over consecutive
integer powers of a unimodular function that is not a scalar multiple
of a character (Corollary~\ref{cor:unimodular-blocks}).

The analytic tool underlying the proof comes from the Littlewood
problem. Its classical version for exponential sums with integer
frequencies was settled independently by Konyagin~\cite{Konyagin81}
and by McGehee, Pigno, and Smith~\cite{MPS}.
Nazarov~\cite{Nazarov} obtained a non-harmonic version on a finite
interval, with a lower bound depending on the separation of the
frequencies but not on their number.
We use an explicit version of such an inequality, proved by
Jaming, Kellay, and Saba~\cite{JKS}.
The relevant quantity is a weighted sum of the moduli of the
coefficients in \emph{increasing frequency order}, rather than
merely the number of large coefficients.

The mechanism of our argument is as follows. We localize the phase
using a function $\chi$ that equals one where the phase takes small
values and vanishes near the endpoints of the chosen coordinate
interval on the circle. For a grid with $N$ points, we choose a single
small parameter $s_N$ so that the values $\varphi(x_j)-s_Nx_j$
are separated by at least $1/(16N^3)$. Exact sampling then turns
the function $\chi(x)e^{it(\varphi(x)-s_Nx)}$ into a family of
exponential sums in the variable $t$. In each of these sums, an
initial positive proportion of the coefficients have modulus one.
The non-harmonic inequality therefore gives a lower bound of order
$\log N$ on every interval of length at least $2304\pi N^3$.
Since the norm bound for the localized modulation is independent of
$ts_N$, we may choose $N$ of order $T^{1/3}$ and
obtain~\eqref{eq:mean-statement}.

\medskip
\noindent\textbf{Organization of the paper.}
Section~\ref{sec:algebra} recalls the required properties of the
Wiener algebra, including the existence of a logarithm after removing
the degree, and proves the equivalence of the phase, convolution,
and operator formulations of Kahane's conjecture.
Section~\ref{sec:tools} states the precise non-harmonic inequality
and proves the sampling and modulation lemmas.
Section~\ref{sec:geometry} constructs the cutoff and separates the
sampled phase values, and Section~\ref{sec:meanproof} proves
Theorem~\ref{thm:mean}. The passage to integer powers and the proof
of Theorem~\ref{thm:kahane} are given in
Section~\ref{sec:kahaneproof}.
Section~\ref{sec:consequences} treats composition operators,
weighted Wiener algebras, and higher-dimensional tori.
We conclude in Section~\ref{sec:remarks} with sharpness and a
comparison with related questions not settled by these arguments.
The core proof occupies Sections~\ref{sec:tools}--\ref{sec:kahaneproof};
Subsection~\ref{subsec:operator-equivalences} may be skipped on a first reading.

\section{The Wiener algebra and equivalent formulations}\label{sec:algebra}

\subsection{Basic properties and phase logarithms}

Throughout the paper, Haar measure on $\T$ is normalized to have
mass one. The $L^2(\T)$ norms refer to this measure, whereas
integration over intervals of the real line is with respect to
ordinary Lebesgue measure. All logarithms are natural.
We write $C(\T;\R)$ and $A(\T;\R)$ for the real-valued
elements of $C(\T)$ and $A(\T)$, respectively; the same
convention is used on higher-dimensional tori.

The algebra $A(\T)$ is a commutative unital Banach algebra,
$\norm1_A=1$, and, for $f,g\in A(\T)$,
\begin{equation}\label{eq:algebra}
 \norm{fg}_A\le\norm f_A\norm g_A,
 \qquad \norm f_\infty\le\norm f_A,
 \qquad \norm{\overline f}_A=\norm f_A.
\end{equation}
Translations of the argument and multiplication by $e^{imx}$,
$m\in\Z$, are isometries of $A(\T)$. All these properties follow
directly from the corresponding identities for Fourier coefficients.
For a real-valued $\varphi\in A(\T)$, the series defining
$e^{it\varphi}$ converges in the algebra norm, uniformly for $t$
in compact intervals. In particular,
\begin{equation}\label{eq:M-properties}
 M_\varphi(-t)=M_\varphi(t),\qquad M_\varphi(t)\ge1,\qquad
 M_\varphi(t+u)\le M_\varphi(t)M_\varphi(u).
\end{equation}

We shall also use the elementary embedding
$W^{1,2}(\T)\subset A(\T)$\footnote{Here $W^{1,2}(\T)$ denotes the
Sobolev space of functions $h\in L^2(\T)$ whose first weak derivative
$h'$ (taken on the circle, in the sense of periodic distributions)
also belongs to $L^2(\T)$.}:
\begin{equation}\label{eq:sobolev}
 \norm h_A\le C_S\bigl(\norm h_2^2+\norm{h'}_2^2\bigr)^{1/2},
 \qquad
 C_S=\left(\sum_{k\in\Z}\frac1{1+k^2}\right)^{1/2}.
\end{equation}
Indeed, apply the Cauchy--Schwarz inequality to
$\sum_k(1+k^2)^{-1/2}(1+k^2)^{1/2}|\widehat h(k)|$
and use Parseval's identity.

We include proofs of the following standard facts about logarithms
in $A(\T)$ so that the passage from a real-valued phase to an arbitrary
circle map requires no additional assumptions.

\begin{lemma}\label{lem:spectrum}
For $h\in A(\T)$, the spectrum and spectral radius satisfy
\[
 \sigma_A(h)=h(\T),\qquad r_A(h)=\norm h_\infty.
\]
\end{lemma}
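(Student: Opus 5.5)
This is Wiener's lemma in Gelfand-theoretic form; we prove the spectrum identity first and deduce the radius formula from it.

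\emph{Inclusion $h(\T)\subset\sigma_A(h)$.} For each $x\in\T$, point evaluation $\delta_x(f)=f(x)$ is a nonzero multiplicative linear functional on $A(\T)$. It is bounded, since $|f(x)|\le\norm f_\infty\le\norm f_A$ by \eqref{eq:algebra}. If $\lambda=h(x_0)$, then $h-\lambda$ vanishes at $x_0$. Were it invertible with inverse $g\in A(\T)$, we would get $1=\delta_{x_0}\bigl((h-\lambda)g\bigr)=0$, a contradiction. Hence $\lambda\in\sigma_A(h)$.

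\emph{Inclusion $\sigma_A(h)\subset h(\T)$.} Take $\lambda\notin h(\T)$ and set $u=h-\lambda$. Then $u\in A(\T)$ and $u$ has no zeros on $\T$. We must show $1/u\in A(\T)$; this is Wiener's lemma and is the only substantive step. The most self-contained route is to identify the maximal ideal space of $A(\T)$ with $\T$. Let $\psi$ be a character of $A(\T)$. Characters of a unital Banach algebra have norm one, so with $e_1(x)=e^{ix}$ we get $|\psi(e_1)|\le\norm{e_1}_A=1$ and $|\psi(e_1)^{-1}|=|\psi(e_{-1})|\le1$. Hence $\psi(e_1)=e^{ix_0}$ for some $x_0$, and so $\psi(e_k)=e^{ikx_0}$ for every $k$. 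Trigonometric polynomials are dense in $A(\T)$ and $\psi$ is continuous, so $\psi(f)=\sum_k\widehat f(k)e^{ikx_0}=f(x_0)$ for all $f\in A(\T)$.

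By Gelfand theory, an element of a commutative unital Banach algebra is invertible if and only if no character vanishes on it; otherwise it would lie in a proper ideal, hence in a maximal ideal, and that ideal is the kernel of a character. Since every character is some $\delta_{x_0}$ and $u(x_0)\neq0$ for all $x_0$, the element $u$ is invertible and $\lambda\notin\sigma_A(h)$.

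If one prefers to avoid Gelfand theory, Newman's elementary argument works instead. Choose a trigonometric polynomial $p$ with $\norm{u-p}_A$ small. Write $1/u=\frac1p\cdot\frac1{1-(p-u)/p}$ and expand in a Neumann series. This requires the estimate $\norm{(p-u)/p}_A<1$, which follows by bounding $\norm{1/p}_A$ through \eqref{eq:sobolev} applied to the smooth function $1/p$. The delicate point there is making the approximation error beat the growth of $\norm{1/p}_A$. I would use the Gelfand route, which makes this step clean.

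\emph{Spectral radius.} Once $\sigma_A(h)=h(\T)$ is known, the spectral radius, being the maximum modulus over the spectrum, equals $\max_{x}|h(x)|=\norm h_\infty$. The spectrum is nonempty and compact, so this maximum is attained. Equivalently, Gelfand's formula gives $r_A(h)=\lim_n\norm{h^n}_A^{1/n}$, which must coincide with this value. No further estimate is needed.
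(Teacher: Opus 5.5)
Your Gelfand-route argument is correct and is essentially the paper's proof: every character is a point evaluation because $|\psi(e^{\pm ix})|\le1$ forces $\psi(e^{ix})=e^{ix_0}$, the spectrum is the set of character values, and $r_A(h)=\norm h_\infty$ follows. The Newman aside is unnecessary and, as phrased, not justified: the Sobolev bound for $\norm{1/p}_A$ involves $\norm{p'}_2$, which is unbounded as $p\to u$ in $A(\T)$ unless $u\in W^{1,2}(\T)$, so $\norm{(p-u)/p}_A<1$ need not follow; Newman instead keeps $p$ fixed, expands $1/u=\sum_{n\ge0}(p-u)^n p^{-n-1}$, and bounds each $\norm{p^{-n-1}}_A$ separately.
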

\begin{proof}
Let $\gamma$ be a nonzero character of the algebra $A(\T)$.
It is continuous, has norm one, and satisfies $\gamma(1)=1$.
For $z(x)=e^{ix}$, set $\lambda=\gamma(z)$.
Since $zz^{-1}=1$ and $\norm z_A=\norm{z^{-1}}_A=1$,
we have $|\lambda|\le1$ and $|\lambda|^{-1}\le1$.
Thus $\lambda=e^{ix_0}$ for some $x_0\in\T$.
Absolute convergence of the Fourier series gives
\[
 \gamma(h)=\sum_{k\in\Z}\widehat h(k)e^{ikx_0}=h(x_0).
\]
Conversely, evaluation at each point is a character.
The conclusion follows by identifying the spectrum of an element
of a commutative unital Banach algebra with its values at the
characters.
\end{proof}

\begin{lemma}[Logarithms in the Wiener algebra]\label{lem:logarithm}
If $v\in A(\T)$ and $v=e^{i\psi}$ for a real-valued
$\psi\in C(\T)$, then $\psi\in A(\T)$.
\end{lemma}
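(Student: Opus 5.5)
The plan is to use the holomorphic functional calculus in the commutative unital Banach algebra $A(\T)$, together with Lemma~\ref{lem:spectrum}, to build a logarithm of $v$ locally and then patch. Since $|v|=1$, the function $v$ is invertible in $A(\T)$ by Lemma~\ref{lem:spectrum} (its spectrum $v(\T)$ avoids $0$). The obstacle to taking a global logarithm directly is that $v(\T)$ may wind around the origin; but $v=e^{i\psi}$ with $\psi$ continuous and single-valued on $\T$ forces the winding number (degree) of $v$ to vanish. I will nevertheless avoid any topological argument about the spectrum and work locally.

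First I use uniform continuity of $\psi$ to pick a partition of $\T$ into finitely many arcs $I_1,\dots,I_r$ such that $\psi$ oscillates by less than $\pi/4$ on a slightly enlarged neighbourhood $J_j\supset I_j$. Next I take a smooth partition of unity $\rho_j$ subordinate to the $J_j$; these lie in $A(\T)$. For each $j$ I pick $x_j\in I_j$ and set $\theta_j=\psi(x_j)$. On $J_j$ the product $e^{-i\theta_j}v$ takes values in the arc $\{e^{i\alpha}:|\alpha|<\pi/4\}$.

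To make this a global element with small spectrum, I modify $v$ off $J_j$: choose $w_j=\eta_j e^{-i\theta_j}v+(1-\eta_j)$, where $\eta_j\in C^\infty(\T)$ equals $1$ on $\operatorname{supp}\rho_j$, is supported in $J_j$, and takes values in $[0,1]$. Then $w_j\in A(\T)$. Its values are convex combinations of $1$ and points of the arc, so they lie in the right half plane $\{\operatorname{Re}z>0\}$. By Lemma~\ref{lem:spectrum}, $\sigma_A(w_j)=w_j(\T)$ lies in this half plane, where the principal logarithm $\operatorname{Log}$ is holomorphic. The holomorphic functional calculus therefore gives $L_j=\operatorname{Log}(w_j)\in A(\T)$. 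Because evaluation at points is a character, $L_j(x)=\operatorname{Log}(w_j(x))$ pointwise. On $\operatorname{supp}\rho_j$ we have $w_j=e^{i(\psi-\theta_j)}$ with $|\psi-\theta_j|<\pi/4$, hence $L_j=i(\psi-\theta_j)$ there.

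Finally I write
\[
\psi=\sum_j\rho_j\psi=\sum_j\rho_j\bigl(\theta_j-iL_j\bigr),
\]
which is a finite sum of products of elements of $A(\T)$, so $\psi\in A(\T)$. The main point needing care is the claim that the functional calculus value is computed pointwise by $\operatorname{Log}\circ w_j$. This follows because characters commute with the holomorphic functional calculus, $\gamma(f(h))=f(\gamma(h))$, and the characters are exactly the point evaluations, as shown in the proof of Lemma~\ref{lem:spectrum}. A concrete alternative, if one prefers, is the power series $\operatorname{Log}(w)=\log c+\sum_{k\ge1}(-1)^{k+1}(w/c-1)^k/k$ for a suitable $c>0$. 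It converges in $A(\T)$ once the spectral radius of $w/c-1$ is $<1$, which by Lemma~\ref{lem:spectrum} is a sup-norm condition ensured by the half-plane localization.
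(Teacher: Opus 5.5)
Your proof is correct, but it takes a more elaborate, local route than the paper's proof.

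The paper works globally. It chooses a real trigonometric polynomial $p$ with $\norm{\psi-p}_\infty<1/4$, so that $w=e^{-ip}v-1$ lies in $A(\T)$ (because $e^{-ip}$ does) and satisfies $\norm w_\infty<1/4$ on all of $\T$. Lemma~\ref{lem:spectrum} then gives $r_A(w)<1/4$, so the single series $\sum_{j\ge1}(-1)^{j+1}w^j/j$ converges in $A(\T)$ to $L=i(\psi-p)$, and $\psi=p-iL$.

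You instead approximate $\psi$ by the constants $\theta_j$ on small arcs. This forces you to do three extra things:
\begin{enumerate}
\item insert the cutoffs $\eta_j$, so that each $w_j$ is a global element with spectrum in the right half-plane;
\item take logarithms via the holomorphic functional calculus (or your recentred series);
\item glue the local pieces with a smooth partition of unity.
\end{enumerate}
All of these steps check out, including the identity $\rho_j\psi=\rho_j(\theta_j-iL_j)$ on all of $\T$.

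Both arguments rest on the same key fact. By Lemma~\ref{lem:spectrum}, uniform control of the values of an element controls its spectrum, so a logarithm exists in $A(\T)$ once the values stay in a region where a branch of $\log$ is holomorphic.

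The paper replaces your piecewise-constant approximation of the phase by a trigonometric polynomial, whose exponential is automatically in $A(\T)$. That one choice removes the need for cutoffs, partitions of unity and the functional calculus, and leaves a one-series proof. Your version follows the classical Wiener--L\'evy pattern of local logarithms glued by a partition of unity. It is longer here, but it does not need a global approximant whose exponential lies in the algebra: only constants and smooth bump functions, which are in $A(\T)$ by \eqref{eq:sobolev}.

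Your opening remarks on the invertibility of $v$ and the vanishing of its degree are not used and can be omitted.
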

\begin{proof}
Choose a real-valued trigonometric polynomial $p$ such that
$\norm{\psi-p}_\infty<1/4$, and set
\[
 w=e^{-ip}v-1\in A(\T).
\]
Then $\norm w_\infty<1/4$. Lemma~\ref{lem:spectrum} gives
$r_A(w)<1/4$. Choose $q$ with $r_A(w)<q<1$.
The spectral radius formula yields $\norm{w^j}_A\le Cq^j$
for all $j\ge1$. Hence the series
\[
 L=\sum_{j=1}^{\infty}\frac{(-1)^{j+1}}{j}\,w^j
\]
converges in the norm of $A(\T)$.
Pointwise, it is the principal branch of $\log(1+w)$.
Since $1+w=e^{i(\psi-p)}$ and $|\psi-p|<1/4<\pi$,
we have $L=i(\psi-p)$. Therefore $\psi=p-iL\in A(\T)$.
\end{proof}

The proof does not identify smallness in the supremum norm with
smallness in the $A$ norm. Convergence of the logarithmic series
in the algebra norm follows from control of the spectral radius.

\begin{lemma}[Removing the degree]\label{lem:degree}
For every $F\in A(\T)$ with $|F|=1$, there exist $m\in\Z$
and a real-valued $\psi\in A(\T)$ such that
\begin{equation}\label{eq:degree-factorization}
 F(x)=e^{imx}e^{i\psi(x)}.
\end{equation}
In particular, for every $n\in\Z$,
\begin{equation}\label{eq:power-norms}
 \norm{F^n}_A=M_\psi(n).
\end{equation}
\end{lemma}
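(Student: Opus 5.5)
The plan is to construct a continuous lift of $F$, use it to define the integer $m$, and then apply Lemma~\ref{lem:logarithm}. Since $F$ is continuous and unimodular, the map $x\mapsto F(x)$ from $\R$ (viewing $F$ as $2\pi$-periodic) to the unit circle has a continuous lift. That is, there is a continuous real-valued $\theta$ on $\R$ with $F(x)=e^{i\theta(x)}$; this is standard covering space theory, or can be seen by piecing together local branches of the argument on short intervals. Periodicity of $F$ gives $e^{i(\theta(x+2\pi)-\theta(x))}=1$. So $\theta(x+2\pi)-\theta(x)\in 2\pi\Z$, and by continuity and connectedness this integer multiple is constant. We write it as $2\pi m$ with $m\in\Z$, the degree of $F$.

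Next we set $\psi(x)=\theta(x)-mx$. This is continuous, real-valued and $2\pi$-periodic, so $\psi\in C(\T;\R)$, and \eqref{eq:degree-factorization} holds by construction. Put $v=e^{-imx}F$. Since $e^{-imx}\in A(\T)$ and $A(\T)$ is an algebra, $v\in A(\T)$, and $v=e^{i\psi}$. Lemma~\ref{lem:logarithm} then gives $\psi\in A(\T)$, which proves the factorization.

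For \eqref{eq:power-norms}, fix $n\in\Z$. We have $F^n=e^{inmx}e^{in\psi}$, and multiplication by the character $e^{inmx}$ is an isometry of $A(\T)$. Hence $\norm{F^n}_A=\norm{e^{in\psi}}_A=M_\psi(n)$. Here $e^{in\psi}$, defined pointwise, agrees with the norm-convergent exponential series in $A(\T)$, since norm convergence implies uniform convergence by \eqref{eq:algebra}.

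There is no genuine obstacle. The only point that needs care is the existence of the continuous lift, which is where the unimodularity and continuity of $F$ are used. All the Banach-algebra content is already contained in Lemma~\ref{lem:logarithm}.
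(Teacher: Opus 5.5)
Your proposal is correct and follows the paper's proof. The only difference is cosmetic: you lift $F$ on all of $\R$ and read the degree off the constant jump $\theta(x+2\pi)-\theta(x)$, whereas the paper lifts on $[0,2\pi]$ and compares endpoint values. Both routes then apply Lemma~\ref{lem:logarithm} to $e^{-imx}F$ and use that multiplication by $e^{inmx}$ is an isometry of $A(\T)$.
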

\begin{proof}
The continuous function $F:[0,2\pi]\to\{z:|z|=1\}$ has a
continuous argument $\vartheta$ on the interval. Such an argument
can be constructed by choosing local branches of the argument on
a finite partition and matching their values at the endpoints of
successive subintervals. The equality $F(0)=F(2\pi)$ implies
$\vartheta(2\pi)-\vartheta(0)=2\pi m$ for some $m\in\Z$.
The function $\psi(x)=\vartheta(x)-mx$ has equal endpoint values
and therefore defines a continuous real-valued function on $\T$.
Since $e^{i\psi}=e^{-imx}F\in A(\T)$,
Lemma~\ref{lem:logarithm} gives $\psi\in A(\T)$.
Equality~\eqref{eq:power-norms} follows from the isometry given by
multiplication by the character $e^{inmx}$.
\end{proof}

\subsection{Circle maps, real phases, and real parameters}
\label{subsec:equivalences}

In the usual formulation of the conjecture, a continuous map
$\alpha:\T\to\T$ is represented by $F(x)=e^{i\alpha(x)}$.
Here the exponential is well defined, although $\alpha$ need not
have a real-valued periodic lift. A continuous lift to the real
line satisfies
\[
 \vartheta(x+2\pi)=\vartheta(x)+2\pi m
 \quad\text{for some }m\in\Z.
\]
Thus the conclusion $F(x)=e^{i(mx+b)}$ means precisely that
$\alpha(x)=mx+b$ modulo $2\pi$; see~\cite{LebedevBH,KS}.
For noninteger $t$, the expression $e^{it\alpha}$ is not
intrinsically defined for a circle-valued map. The real-parameter
formulation below therefore uses the periodic phase obtained
\emph{after} removal of the degree.

\begin{lemma}[Comparison of integer and real parameters]
\label{lem:parameter-comparison}
Let $\psi\in A(\T;\R)$ and
$B_\psi=\max_{|u|\le1}M_\psi(u)<\infty$. If $n\in\Z$ and
$|t-n|\le1$, then
\begin{equation}\label{eq:parameter-comparison}
 B_\psi^{-1}M_\psi(n)\le M_\psi(t)\le B_\psi M_\psi(n).
\end{equation}
Consequently,
\begin{equation}\label{eq:integer-real-equivalence}
 \begin{split}
 M_\psi(n)=o(\log|n|)\quad(|n|\to\infty,\ n\in\Z)\\
 \Longleftrightarrow\quad
 M_\psi(t)=o(\log|t|)\quad(|t|\to\infty,\ t\in\R).
 \end{split}
\end{equation}
\end{lemma}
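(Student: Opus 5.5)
The two-sided comparison~\eqref{eq:parameter-comparison} should follow from the submultiplicativity and symmetry in~\eqref{eq:M-properties}; the equivalence~\eqref{eq:integer-real-equivalence} then comes from comparing $\log|t|$ with $\log|n|$ when $|t-n|\le1$.

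First, $B_\psi$ is finite because $M_\psi$ is continuous on $\R$, as noted after~\eqref{eq:M-Q}, and the maximum is taken over the compact set $[-1,1]$. Moreover $B_\psi\ge1$, since $M_\psi\ge1$.

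Next, fix $n\in\Z$ and $t\in\R$ with $|t-n|\le1$, and put $u=t-n$, so $|u|\le1$. Writing $t=n+u$, submultiplicativity gives
\[
 M_\psi(t)\le M_\psi(n)M_\psi(u)\le B_\psi M_\psi(n).
\]
Writing $n=t+(-u)$ and using $M_\psi(-u)=M_\psi(u)\le B_\psi$ gives
\[
 M_\psi(n)\le M_\psi(t)M_\psi(-u)\le B_\psi M_\psi(t).
\]
Together these two inequalities are~\eqref{eq:parameter-comparison}.

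For the forward implication in~\eqref{eq:integer-real-equivalence}, take $t\in\R$ and let $n$ be an integer nearest to $t$. Then $|t-n|\le1/2$ and $M_\psi(t)\le B_\psi M_\psi(n)$. As $|t|\to\infty$ we have $|n|\to\infty$ and $\log|n|/\log|t|\to1$, because $\bigl||n|-|t|\bigr|\le1$. Hence
\[
 \frac{M_\psi(t)}{\log|t|}\le B_\psi\,\frac{M_\psi(n)}{\log|n|}\cdot\frac{\log|n|}{\log|t|}\longrightarrow0 .
\]
The reverse implication is immediate: integers are real numbers, so restricting the hypothesis to $t=n\in\Z$ gives the integer statement; the comparison inequality is not even needed here.

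There is no real obstacle. The only points needing care are the finiteness of $B_\psi$, which rests on the continuity of $M_\psi$, and the bookkeeping showing $\log|n|\sim\log|t|$ when $|t-n|\le1$.
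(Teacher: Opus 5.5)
Your proposal is correct and follows the paper's proof essentially verbatim: the two-sided bound comes from submultiplicativity applied to $t=n+(t-n)$ and $n=t+(n-t)$, and the equivalence from choosing $n$ with $|t-n|\le1/2$, using $\log|n|/\log|t|\to1$, with the reverse direction being restriction to integers. Your appeal to $M_\psi(-u)=M_\psi(u)$ is harmless but unnecessary, since $|n-t|\le1$ already gives $M_\psi(n-t)\le B_\psi$ directly from the definition of $B_\psi$.
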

\begin{proof}
The upper bound follows from
$M_\psi(t)\le M_\psi(n)M_\psi(t-n)$, and the lower bound
from $M_\psi(n)\le M_\psi(t)M_\psi(n-t)$.
For each $t$, choose an integer $n$ with $|t-n|\le1/2$.
Then $\log|n|/\log|t|\to1$ as $|t|\to\infty$, so the upper
bound gives the nontrivial implication in
\eqref{eq:integer-real-equivalence}. The reverse implication
is restriction to integer parameters.
\end{proof}

\begin{proposition}[Equivalent rigidity assertions]
\label{prop:equivalent-formulations}
The following assertions are equivalent.
\begin{enumerate}
\item[(i)] Every continuous unimodular $F$ satisfying
$\norm{F^n}_A=o(\log|n|)$ as $|n|\to\infty$, $n\in\Z$,
is a scalar multiple of a character.
\item[(ii)] Every real-valued $\psi\in C(\T)$ satisfying
$\norm{e^{in\psi}}_A=o(\log|n|)$ as $|n|\to\infty$,
$n\in\Z$, is constant.
\item[(iii)] Assertion~\textup{(ii)} holds with $C(\T;\R)$
replaced by $A(\T;\R)$.
\item[(iv)] Every real-valued $\psi\in A(\T)$ satisfying
$\norm{e^{it\psi}}_A=o(\log|t|)$ as $|t|\to\infty$,
$t\in\R$, is constant.
\end{enumerate}
In \textup{(i)} and \textup{(ii)}, the norm is understood to be
$+\infty$ outside $A(\T)$. All four assertions are therefore
consequences of Theorem~\ref{thm:kahane}.
\end{proposition}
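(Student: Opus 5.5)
I will prove the cycle of implications (i)$\Rightarrow$(ii)$\Rightarrow$(iii)$\Leftrightarrow$(iv)$\Rightarrow$(i). The final sentence of the proposition then follows, because (i) is exactly Theorem~\ref{thm:kahane}.

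\emph{(i)$\Rightarrow$(ii).} Let $\psi\in C(\T;\R)$ satisfy the hypothesis of (ii), and put $F=e^{i\psi}$. Then $F$ is continuous and unimodular, and $\norm{F^n}_A=\norm{e^{in\psi}}_A=o(\log|n|)$. By (i), $e^{i\psi(x)}=e^{i(mx+b)}$. Hence $\psi(x)-mx-b\in2\pi\Z$ for all $x\in[0,2\pi]$, and by continuity this quantity is a constant $2\pi k$. Periodicity of $\psi$ gives $\psi(2\pi)=\psi(0)$, which forces $m=0$. So $\psi$ is constant.

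\emph{(ii)$\Rightarrow$(iii).} This is a restriction, since $A(\T;\R)\subset C(\T;\R)$.

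\emph{(iii)$\Leftrightarrow$(iv).} For $\psi\in A(\T;\R)$, Lemma~\ref{lem:parameter-comparison} shows that the integer hypothesis in (iii) and the real-parameter hypothesis in (iv) are equivalent. The two assertions therefore have identical hypotheses and conclusions on the same class of functions.

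\emph{(iii)$\Rightarrow$(i).} This is the only step needing care, because $F$ is merely continuous. Let $F$ satisfy the hypothesis of (i). Since $o(\log|n|)$ is finite for large $|n|$, there is some $n_0\ge1$ with $F^{n_0}\in A(\T)$; in fact this holds for all large $|n|$. I would use two consecutive exponents: choose $n$ large with both $F^{n}$ and $F^{n+1}$ in $A(\T)$. Then $F=F^{n+1}\cdot\overline{F^{n}}$, because $\overline{F^n}=F^{-n}$ by unimodularity. Since $A(\T)$ is closed under conjugation and products, $F\in A(\T)$.

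Lemma~\ref{lem:degree} now gives $F=e^{imx}e^{i\psi}$ with $\psi\in A(\T;\R)$ and $\norm{F^n}_A=M_\psi(n)$. Thus $\norm{e^{in\psi}}_A=o(\log|n|)$, and (iii) yields that $\psi\equiv b$ is constant. Hence $F(x)=e^{i(mx+b)}$.

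The main obstacle is precisely this regularity upgrade: we must show that $F\in A(\T)$ from the fact that only its large powers are known to lie in $A(\T)$. The consecutive-powers trick handles it. No logarithm is needed at that stage, because Lemma~\ref{lem:logarithm} enters only afterwards, through Lemma~\ref{lem:degree}.
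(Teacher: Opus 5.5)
Your proposal is correct and follows essentially the same route as the paper: the consecutive-powers identity $F=F^{n+1}\overline{F^n}$ followed by Lemma~\ref{lem:degree} gives (iii)$\Rightarrow$(i), and the degree-zero argument forcing $m=0$ gives (i)$\Rightarrow$(ii). As in the paper, (ii)$\Rightarrow$(iii) is restriction and (iii)$\Leftrightarrow$(iv) is Lemma~\ref{lem:parameter-comparison}.
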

\begin{proof}
We establish the equivalences using only the preceding algebraic
facts, without using the proof of Theorem~\ref{thm:kahane}.
If a continuous unimodular $F$ has two consecutive powers in
$A(\T)$, then
\[
 F=F^{n+1}\overline{F^n}\in A(\T).
\]
Thus the growth assumption in~(i) entails $F\in A(\T)$.
Lemma~\ref{lem:degree} gives
$F=e^{imx}e^{i\psi}$ with $\psi\in A(\T;\R)$ and
$\norm{F^n}_A=M_\psi(n)$. Hence (iii) implies (i).
Conversely, apply (i) to $e^{i\psi}$ with
$\psi\in C(\T;\R)$. Since $\psi$ is periodic, this map has
degree zero. Its representation $e^{i\psi}=e^{i(mx+b)}$
therefore has $m=0$, and the continuous function
$\psi-b$ takes values in $2\pi\Z$. It is constant, which
proves (ii). Clearly (ii) implies (iii).

For completeness, the apparently weaker regularity assumption
in (ii) creates no additional case: consecutive large powers
give $e^{i\psi}\in A(\T)$, and
Lemma~\ref{lem:logarithm} then gives $\psi\in A(\T)$.
Finally, (iii) and (iv) are equivalent by
Lemma~\ref{lem:parameter-comparison}.
\end{proof}

The assertion that a non-affine map satisfies
\[
 \limsup_{|n|\to\infty}
 \frac{\norm{F^n}_A}{\log|n|}>0
\]
is the contrapositive of (i), not an additional conjecture.
Similarly, for an invertible $F\in A(\T)$, setting
$\Theta_F(N)=\max_{|n|\le N}\norm{F^n}_A$ gives
\begin{equation}\label{eq:maximum-equivalence}
 \norm{F^n}_A=o(\log|n|)
 \quad\Longleftrightarrow\quad
 \Theta_F(N)=o(\log N).
\end{equation}
Indeed, split the maximum into a fixed finite set and the
indices where $\norm{F^n}_A\le\varepsilon\log|n|$.
These elementary equivalences should be distinguished from the
uniform lower bounds for averages in Theorem~\ref{thm:mean}
and Corollary~\ref{cor:unimodular-blocks}; the latter are stronger
conclusions of the proof, not reductions supplied by this section.

\subsection{Invertible elements, convolution powers, and operators}
\label{subsec:operator-equivalences}

The operator interpretation of quantitative Beurling--Helson
theorems is already explicit in~\cite[Remark~1]{LebedevBH}.
We record the reduction at the logarithmic threshold.

\begin{proposition}\label{prop:operator-equivalences}
Each of the following assertions is equivalent to
Proposition~\ref{prop:equivalent-formulations}\textup{(i)}.
\begin{enumerate}
\item[(a)] If $f$ is invertible in $A(\T)$ and
$\norm{f^n}_A=o(\log|n|)$ as $|n|\to\infty$, $n\in\Z$,
then $f(x)=c e^{imx}$ with $|c|=1$ and $m\in\Z$.
\item[(b)] If $a$ is invertible in the convolution algebra
$\ell^1(\Z)$ and
$\norm{a^{*n}}_1=o(\log|n|)$ as $|n|\to\infty$, $n\in\Z$,
then $a=c\delta_m$ with $|c|=1$ and $m\in\Z$.
\item[(c)] If $U$ is a bounded invertible operator on
$\ell^1(\Z)$ commuting with translations and
$\norm{U^n}_{\ell^1\to\ell^1}=o(\log|n|)$ as $|n|\to\infty$,
$n\in\Z$, then $U=cS_m$, where
$(S_m b)(k)=b(k-m)$ and $|c|=1$.
\end{enumerate}
For negative $n$, the powers in \textup{(a)}--\textup{(c)} are
powers of the respective inverses.
\end{proposition}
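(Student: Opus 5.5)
I will show that (a), (b) and (c) are each equivalent to Proposition~\ref{prop:equivalent-formulations}(i). The route is (a)$\Leftrightarrow$(i) first, then (a)$\Leftrightarrow$(b)$\Leftrightarrow$(c), using only the isometric transfer between the three algebras.

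\textbf{Step 1: (a)$\Leftrightarrow$(i).}
Suppose (a) holds and let $F$ be as in (i). The argument in the proof of Proposition~\ref{prop:equivalent-formulations} shows that two consecutive powers in $A(\T)$ give $F=F^{n+1}\overline{F^n}\in A(\T)$. Since $|F|=1$, we have $F^{-1}=\overline F\in A(\T)$, so $F$ is invertible. For negative $n$, $F^n=(F^{-1})^{|n|}$ is exactly the inverse power used in (a), so (a) gives the conclusion of (i).

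Conversely, suppose (i) holds and $f$ is invertible in $A(\T)$ with $\norm{f^n}_A=o(\log|n|)$ for $n\in\Z$. The first task is to show $|f|\equiv1$, after which (i) applies directly.
\begin{itemize}
\item By \eqref{eq:algebra}, $\norm{f^n}_\infty\le\norm{f^n}_A=o(\log|n|)$.
\item If $|f(x_0)|>1$ at some point, then $|f(x_0)|^n$ grows exponentially as $n\to+\infty$, which is incompatible with $o(\log n)$.
\item If $|f(x_0)|<1$ at some point, the same contradiction arises with $n\to-\infty$, using the powers of $f^{-1}$.
\end{itemize}
Hence $|f|=1$ everywhere, $f$ is a continuous unimodular function, and (i) gives $f=ce^{imx}$ with $|c|=1$. (Lemma~\ref{lem:spectrum} gives the same conclusion through the spectral radius, but the pointwise bound is enough.)

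\textbf{Step 2: (a)$\Leftrightarrow$(b).}
The map $a\mapsto\sum_k a(k)e^{ikx}$ is an isometric algebra isomorphism from $(\ell^1(\Z),*)$ onto $A(\T)$. It sends convolution powers, including inverse powers, to pointwise powers, and it sends $\delta_m$ to $e^{imx}$. So (a) and (b) are the same statement written in two isomorphic algebras.

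\textbf{Step 3: (b)$\Leftrightarrow$(c).}
This is the only step with real content: identifying translation-invariant operators on $\ell^1(\Z)$ with convolution operators, and computing their norms. Let $U$ be bounded on $\ell^1(\Z)$ and commute with every $S_m$.
\begin{enumerate}
\item Set $a=U\delta_0\in\ell^1$.
\item Commutation gives $U\delta_m=US_m\delta_0=S_mU\delta_0=S_ma=a*\delta_m$.
\item Since finitely supported sequences are dense in $\ell^1$ and $U$ is bounded, $Ub=a*b$ for all $b\in\ell^1$.
\item The operator norm equals $\norm a_1$. The bound $\le$ is Young's inequality, and testing on $\delta_0$ gives $\ge$.
\end{enumerate}
If $U$ is invertible, then $U^{-1}$ also commutes with translations. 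So $U^{-1}$ is convolution by some $b\in\ell^1$, and $a*b=\delta_0$, i.e.\ $a$ is invertible in $\ell^1$. Powers correspond as $U^n\leftrightarrow a^{*n}$ for all $n\in\Z$, with matching norms, and $cS_m$ is convolution by $c\delta_m$. Conversely, every invertible $a\in\ell^1$ defines an invertible operator of this kind. Therefore (b) and (c) are the same statement, which completes the chain of equivalences.
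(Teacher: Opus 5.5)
Your proposal is correct and follows the paper's proof essentially step for step: (a)$\Leftrightarrow$(i) via unimodularity together with the consecutive-powers argument; (a)$\Leftrightarrow$(b) via the isometric isomorphism $\ell^1(\Z)\cong A(\T)$; and (b)$\Leftrightarrow$(c) via $Ub=a*b$ with $a=U\delta_0$ and $\norm U_{\ell^1\to\ell^1}=\norm a_1$. The only deviation is that you get $|f|=1$ from the bound $|f(x_0)|^n\le\norm{f^n}_A$ (and its analogue for $f^{-1}$) rather than from the spectral radius formula combined with Lemma~\ref{lem:spectrum}, which is a valid and slightly more elementary shortcut.
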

\begin{proof}
In (a), the growth assumption and the spectral radius formula give
$r_A(f)\le1$ and $r_A(f^{-1})\le1$.
Lemma~\ref{lem:spectrum} implies $|f(x)|\le1$ and
$|f(x)|^{-1}\le1$ at every point, hence $|f|=1$.
Assertion~(i) applies. Conversely, every unimodular
$f\in A(\T)$ is invertible, with inverse $\overline f$;
and the consecutive-powers argument above reduces a continuous
$F$ in~(i) to this case.

The map
\[
 a\longmapsto f_a(x)=\sum_{k\in\Z}a(k)e^{ikx}
\]
is an isometric isomorphism from the convolution algebra
$\ell^1(\Z)$ onto $A(\T)$, sending $\delta_m$ to $e^{imx}$.
This proves the equivalence of (a) and (b).
If $U$ commutes with translations, put $a=U\delta_0$.
For finitely supported $b$, commutation gives $Ub=a*b$;
density extends this identity to every $b\in\ell^1(\Z)$.
Young's inequality and evaluation at $\delta_0$ yield
\[
 \norm U_{\ell^1\to\ell^1}=\norm a_1.
\]
The inverse of $U$, when it exists, also commutes with
translations and is convolution by the inverse of $a$.
Thus $\norm{U^n}_{\ell^1\to\ell^1}=\norm{a^{*n}}_1$ for every
$n\in\Z$, proving the remaining equivalence.
\end{proof}

The two-sided hypothesis is essential when unimodularity is not
assumed. For example,
$a=\tfrac34\delta_0+\tfrac14\delta_1$ is invertible in
$\ell^1(\Z)$ by a Neumann series and satisfies
$\norm{a^{*n}}_1=1$ for every $n\ge0$, but it is not a point mass.
This does not contradict the one-sided reformulation for
unimodular functions given after Theorem~\ref{thm:kahane}.

\section{The non-harmonic inequality, sampling, and modulation}
\label{sec:tools}

\subsection{Littlewood's inequality on a finite interval}

We use the following precise form of
\cite[Theorem~1.1(iii), p.~1942]{JKS}.

\begin{theorem}[Jaming--Kellay--Saba]\label{thm:JKS}
Let $\lambda_1<\cdots<\lambda_N$ be real numbers satisfying
$\lambda_{r+1}-\lambda_r\ge1$, and let $z_1,\ldots,z_N\in\C$.
For every $L\ge72$,
\begin{equation}\label{eq:JKS}
 \frac1L\int_{-L/2}^{L/2}
 \left|\sum_{r=1}^N z_re^{2\pi i\lambda_r u}\right|\dd u
 \ge\frac1{122}\sum_{r=1}^N\frac{|z_r|}{r+1}.
\end{equation}
\end{theorem}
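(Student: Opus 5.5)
Since Theorem~\ref{thm:JKS} is quoted from~\cite{JKS}, the paper uses it as a black box. If I had to prove it, I would follow the duality method of McGehee--Pigno--Smith~\cite{MPS}, adapted to real frequencies as in Nazarov~\cite{Nazarov}. Write $f(u)=\sum_r z_re^{2\pi i\lambda_r u}$ and $I=[-L/2,L/2]$. The goal is a test function $\Phi$ on $I$ with $\norm{\Phi}_\infty\le C_1$ and
\[
 \operatorname{Re}\frac1L\int_I f(u)\overline{\Phi(u)}\dd u\ \ge\ c_1\sum_{r=1}^N\frac{|z_r|}{r+1}.
\]
This immediately gives~\eqref{eq:JKS} with constant $c_1/C_1$, which should come out as $1/122$.

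The first step replaces orthogonality of characters on $\T$ by approximate orthogonality on $I$. I would fix a nonnegative window $\rho$ supported in $I$, of mass $L$, whose Fourier transform is small outside $|\xi|<1/2$ once $L$ is large. A suitably scaled Fejér-type or Selberg-type bump should do. The condition $L\ge72$ enters here: it makes the cross terms $\widehat\rho(\lambda_r-\lambda_s)$ with $r\neq s$ small relative to $\widehat\rho(0)$, using separation $\ge1$ and summable decay of $\widehat\rho$. All test functions are then built as $\rho$-weighted combinations of exponentials $e^{2\pi i\lambda_r u}$, so that pairing with $f$ recovers $\overline{\text{(coefficient)}}\,z_r$ up to small off-diagonal errors.

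The second step is the MPS construction itself. I would split the indices into dyadic blocks $B_k=\{r:2^k\le r<2^{k+1}\}$ in increasing frequency order. For each block I set $g_k=\sum_{r\in B_k}\frac{z_r}{|z_r|}\,\frac{e^{2\pi i\lambda_r u}}{r+1}$, which has $L^2(\rho)$ norm $O(1)$, and take $Q_k=g_k$ and $\Phi=\sum_k g_k e^{-(|Q_{k+1}|+|Q_{k+2}|+\cdots)}$ in the MPS recursive form. One damps the higher-frequency blocks by factors $e^{-|Q|}$, replacing $e^{-|Q|}$ by $e^{-P[|Q|]-i\widetilde{P}[|Q|]}$, an analytic-type smoothing whose spectrum lies on one side. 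This keeps $\norm{\Phi}_\infty$ bounded via the elementary estimate $|x e^{-x}|\le e^{-1}$ summed telescopically. It also ensures that each correction term has frequencies far enough from the $\lambda_r$ in lower blocks that its pairing with $f$ is controlled. On $\R$, I would implement the ``analytic projection'' with respect to the ordered real frequencies as a Fourier multiplier, using $\rho$ again to localize to $I$.

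The main obstacle is this projection step. On $\T$ the Riesz projection and spectral gaps are exact. For real frequencies on a finite interval, each multiplier produces tails that leak across block boundaries, and these leaks must be summed over all blocks with explicit constants. My first attempt would be to follow Nazarov's approach: smooth the cutoff at a frequency scale of order $1/L\le1/72$, and bound the leakage by the $L^1$ norm of the corresponding kernel multiplied by the block $L^2$ norms. The numerical constant then comes from tracking these losses together with the $\widehat\rho$ cross terms.
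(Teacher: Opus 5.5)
You are right that the paper gives no proof of this statement. It is quoted from Jaming--Kellay--Saba (their Theorem~1.1(iii)). The paper adds only two remarks: $72$ and $1/122$ are independent of $N$ and of the coefficients, and the finite-interval form is what Section~\ref{sec:meanproof} needs. So your first sentence is the paper's entire treatment, and citing the result is the appropriate proof here.

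The outline you add, however, would not prove \eqref{eq:JKS}, and the trouble is not only the step you flag: your MPS step puts the one-sidedness on the wrong side. Model it on $\T$ with integer frequencies $n_r$. Your $\Phi=\sum_k g_ke^{-(|Q_{k+1}|+\cdots)}$ damps block $k$ by the higher blocks, and you make the damping factor analytic so that corrections stay off the lower blocks. What works is the opposite choice: $E_k=\exp\bigl(-\eta\sum_{j>k}h_j\bigr)$ with $h_j=|g_j|-i\widetilde{|g_j|}$. Then $h_j$ has spectrum in $(-\infty,0]$ and $\operatorname{Re}h_j=|g_j|$, so $g_kE_k$ lives at or below block $k$. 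Here $\eta$ is a small absolute constant, which your $\Phi$ lacks; with $\eta=1$ the estimates below do not close.

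With this choice, the coefficients of $\Phi=\sum_kg_kE_k$ on block $l$ receive corrections only from $k\ge l$. Each is bounded by $\norm{g_k(E_k-\widehat{E_k}(0))}_{L^1}\le\norm{g_k}_2\norm{E_k-1}_2\lesssim\eta2^{-k}$. This uses $|1-e^{-w}|\le|w|$ for $\operatorname{Re}w\ge0$ and the decay $\norm{g_k}_2\le2^{-k/2}$, not merely $O(1)$. These corrections sum to $O(\eta2^{-l})$ and are absorbed by the main coefficient $\widehat{E_l}(0)/(r+1)\gtrsim2^{-l}$, while telescoping gives $\norm\Phi_\infty\lesssim1/\eta$. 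With analytic factors, by contrast, the corrections from block $k$ move upward onto every block $l>k$. There the natural bound $O(\eta2^{-k})$ is far larger than the target size $\asymp2^{-l}$. Keeping corrections off the lower blocks protects the side that needed no protection.

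Beyond this, the two points you defer are the substance of the theorem. On $[-L/2,L/2]$ with real frequencies, the projection enters through an exponential, so a linear leakage bound (kernel $L^1$ norm times block $L^2$ norms) does not control it. If the smoothed projection $h$ has spectrum reaching $O(1/L)$ past $0$, then $h^n$ reaches $O(n/L)$, and $e^{-h}$ is not one-sided up to $O(1/L)$. Discarding the leaking part instead destroys the identity $|e^{-h}|=e^{-|g|}$ on which the telescoping sup-norm bound rests. Reconciling these requirements, with errors uniform in the number $\approx\log_2N$ of blocks at fixed $L$, is the genuinely non-harmonic content supplied by the cited paper. Finally, the statement is quantitative, but your sketch fixes no window and computes no error term. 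So $1/122$ is asserted rather than derived, and so is the existence of any absolute threshold and constant.
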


The constants $72$ and $122$ are independent of $N$ and of the
coefficients. The independence from $N$ in the finite-interval
case is addressed explicitly in \cite[Remark~2.13]{JKS}.
The coefficients may vanish; no assumption
$|z_r|\ge1$ is needed. The fact that the inequality holds on a
finite interval is essential for the argument below. An inequality
only for the limit as $L\to\infty$ with $N$ fixed would not suffice
to choose $N$ as a function of the interval length.

\begin{corollary}\label{cor:JKS-scaled}
If $q_1<\cdots<q_N$ and $q_{r+1}-q_r\ge\delta>0$, then,
for any $z_r\in\C$, $a\in\R$, and $T\ge144\pi/\delta$,
\begin{equation}\label{eq:JKS-scaled}
 \frac1T\int_a^{a+T}
 \left|\sum_{r=1}^N z_re^{itq_r}\right|\dd t
 \ge\frac1{122}\sum_{r=1}^N\frac{|z_r|}{r+1}.
\end{equation}
\end{corollary}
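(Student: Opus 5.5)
The corollary follows from Theorem~\ref{thm:JKS} by an affine change of variables: we rescale the frequencies to have unit gaps and translate the interval so that it is centred at the origin.

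\emph{Step 1: rescaling.} Put $\lambda_r=q_r/\delta$. Then $\lambda_1<\cdots<\lambda_N$ and $\lambda_{r+1}-\lambda_r\ge1$. Write $t=a+T/2+2\pi u/\delta$, so that $tq_r=(a+T/2)q_r+2\pi\lambda_r u$. The interval $t\in[a,a+T]$ corresponds to $u\in[-L/2,L/2]$ with
\[
 L=\frac{\delta T}{2\pi},
\]
and $\dd t=(2\pi/\delta)\dd u$. Hence
\[
 \frac1T\int_a^{a+T}\Bigl|\sum_r z_re^{itq_r}\Bigr|\dd t
 =\frac{2\pi}{\delta T}\int_{-L/2}^{L/2}\Bigl|\sum_r w_re^{2\pi i\lambda_r u}\Bigr|\dd u
 =\frac1L\int_{-L/2}^{L/2}\Bigl|\sum_r w_re^{2\pi i\lambda_r u}\Bigr|\dd u,
\]
where $w_r=z_re^{i(a+T/2)q_r}$. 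Since $|w_r|=|z_r|$, the right-hand side of~\eqref{eq:JKS} is unchanged when the $z_r$ are replaced by the $w_r$.

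\emph{Step 2: checking the hypothesis on $L$.} Theorem~\ref{thm:JKS} requires $L\ge72$, which means $\delta T/(2\pi)\ge72$, that is, $T\ge144\pi/\delta$. This is exactly the assumption of the corollary. Applying~\eqref{eq:JKS} to the $\lambda_r$ and the $w_r$ then gives~\eqref{eq:JKS-scaled}.

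No step here is a genuine obstacle. The only point needing care is to confirm that the ordering index $r$ is preserved under rescaling, which holds because $\delta>0$ keeps the frequencies in the same increasing order. The weights $1/(r+1)$ therefore attach to the same coefficients as in the original inequality.
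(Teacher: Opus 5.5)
Your proposal is correct and follows the paper's proof exactly: the same substitution $t=a+T/2+2\pi u/\delta$, the same choices $\lambda_r=q_r/\delta$, $L=\delta T/(2\pi)$, $w_r=z_re^{i(a+T/2)q_r}$, and the same observation that $L\ge72$ is equivalent to $T\ge144\pi/\delta$. Your computations of the endpoints, the Jacobian factor, and the preservation of the ordering index are all accurate.
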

\begin{proof}
In~\eqref{eq:JKS}, take
\[
 \lambda_r=\frac{q_r}{\delta},\qquad
 L=\frac{\delta T}{2\pi},\qquad
 w_r=z_re^{i(a+T/2)q_r}.
\]
The substitution $t=a+T/2+2\pi u/\delta$ gives
$\dd t/T=\dd u/L$, and $|w_r|=|z_r|$.
The condition $L\ge72$ is equivalent to $T\ge144\pi/\delta$.
\end{proof}

\subsection{Exact sampling}

\begin{lemma}\label{lem:sampling}
Let $x_j=2\pi j/N$, $0\le j<N$.
For every $H\in A(\T)$,
\begin{equation}\label{eq:sampling}
 \frac1N\sum_{k=0}^{N-1}
 \left|\sum_{j=0}^{N-1}H(x_j)e^{-2\pi ikj/N}\right|
 \le\norm H_A.
\end{equation}
\end{lemma}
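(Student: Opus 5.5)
The plan is to compute the inner discrete Fourier transform exactly in terms of the Fourier coefficients of $H$, which reduces the claim to the triangle inequality. Since $H\in A(\T)$, its Fourier series converges absolutely and pointwise, so
\[
 H(x_j)=\sum_{m\in\Z}\widehat H(m)e^{2\pi imj/N}.
\]
We substitute this into the inner sum and interchange the finite $j$-sum with the absolutely convergent $m$-sum. The orthogonality relation
\[
 \sum_{j=0}^{N-1}e^{2\pi i(m-k)j/N}
 =\begin{cases}N,& m\equiv k\pmod N,\\ 0,&\text{otherwise},\end{cases}
\]
then gives the aliasing identity
\[
 \frac1N\sum_{j=0}^{N-1}H(x_j)e^{-2\pi ikj/N}
 =\sum_{\ell\in\Z}\widehat H(k+\ell N),
 \qquad 0\le k<N.
\]

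Next we take absolute values and apply the triangle inequality to each aliased sum. This gives
\[
 \frac1N\left|\sum_{j}H(x_j)e^{-2\pi ikj/N}\right|
 \le\sum_{\ell\in\Z}|\widehat H(k+\ell N)|.
\]
We then sum over $0\le k<N$. The residue classes $k+N\Z$ for $0\le k<N$ partition $\Z$, so the total is exactly $\sum_{m\in\Z}|\widehat H(m)|=\norm H_A$, which is~\eqref{eq:sampling}.

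There is no real obstacle here. The only point that needs care is justifying the interchange of summations and the pointwise identity $H(x_j)=\sum_m\widehat H(m)e^{imx_j}$. Both follow from $\sum_m|\widehat H(m)|<\infty$: this makes the Fourier series converge uniformly to the continuous function $H$, so the identity holds at every point and not merely almost everywhere.
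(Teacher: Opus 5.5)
Your proposal is correct and matches the paper's proof: expand $H(x_j)$ in its absolutely convergent Fourier series, use orthogonality on $\Z/N\Z$ to get the aliasing identity $\frac1N\sum_j H(x_j)e^{-2\pi ikj/N}=\sum_{\ell}\widehat H(k+\ell N)$, then apply the triangle inequality and note that the classes $k+N\Z$ partition $\Z$. Your remark that absolute convergence gives pointwise equality of $H$ with its Fourier series at every grid point (not just almost everywhere) is a worthwhile detail that the paper leaves implicit.
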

\begin{proof}
Absolute convergence of the Fourier series allows us to interchange
the sums. Orthogonality of the characters of $\Z/N\Z$ gives
\begin{align*}
 \frac1N\sum_{j=0}^{N-1}H(x_j)e^{-2\pi ikj/N}
 &=\sum_{r\in\Z}\widehat H(r)
       \left(\frac1N\sum_{j=0}^{N-1}e^{2\pi i(r-k)j/N}\right)\\
 &=\sum_{\ell\in\Z}\widehat H(k+\ell N).
\end{align*}
Summing the absolute values over $0\le k<N$ and applying the
triangle inequality gives~\eqref{eq:sampling}, since the residue
classes $k+N\Z$ form a disjoint partition of $\Z$.
\end{proof}

This is contractivity of exact restriction to a finite subgroup,
not a Riemann-sum approximation to an integral. In particular,
$N$ need not exceed the oscillation parameter of $H$.

\subsection{A uniform bound for localized modulation}

\begin{lemma}\label{lem:modulation}
Let $\chi\in C_c^\infty((0,2\pi))$, with the products below
extended periodically. Then
\begin{equation}\label{eq:Kchi}
 K_\chi:=\sup_{y\in\R}\norm{\chi(x)e^{-iyx}}_A<\infty.
\end{equation}
For a real-valued $\varphi\in A(\T)$ and $s,t\in\R$,
\begin{equation}\label{eq:modulation-upper}
 \norm{\chi(x)e^{it(\varphi(x)-sx)}}_A
 \le K_\chi M_\varphi(t).
\end{equation}
\end{lemma}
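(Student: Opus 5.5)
The plan is to treat the two assertions separately, with the first carrying the real content.

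For \eqref{eq:Kchi}, we view $\chi(x)e^{-iyx}$, extended periodically, as a function whose Fourier coefficients are samples of the Fourier transform of $\chi$ on $\R$. Since $\chi$ is supported in $(0,2\pi)$, we have
\[
 \widehat{\chi e^{-iy\cdot}}(k)=\frac1{2\pi}\int_0^{2\pi}\chi(x)e^{-i(k+y)x}\dd x
 =\frac1{2\pi}\widetilde\chi(k+y),
\]
where $\widetilde\chi$ denotes the Fourier transform on the real line. Because $\chi\in C_c^\infty$, two integrations by parts (no boundary terms, by compact support) give
\[
 |\widetilde\chi(\xi)|\le\frac{C}{1+\xi^2},
 \qquad C=\norm\chi_{L^1}+\norm{\chi''}_{L^1}.
\]
Hence
\[
 \norm{\chi e^{-iyx}}_A\le\frac{C}{2\pi}\sum_{k\in\Z}\frac1{1+(k+y)^2}.
\]
This sum is bounded uniformly in $y$: it is $1$-periodic in $y$, so we may take $y\in[0,1]$. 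There each term with $k\ge0$ is at most $1/(1+k^2)$, and each term with $k\le-1$ is at most $1/(1+(|k|-1)^2)$, which gives the bound $2\sum_{k\ge0}(1+k^2)^{-1}$. Alternatively, we could apply \eqref{eq:sobolev}: $\norm{\chi e^{-iyx}}_2\le\norm\chi_\infty$, but the derivative grows like $|y|$, so that route is not uniform. The direct coefficient computation is therefore the right one.

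For \eqref{eq:modulation-upper}, we factor
\[
 \chi(x)e^{it(\varphi(x)-sx)}=\bigl(\chi(x)e^{-itsx}\bigr)\cdot e^{it\varphi(x)},
\]
with the first factor understood periodically. By the algebra inequality \eqref{eq:algebra},
\[
 \norm{\chi(x)e^{it(\varphi(x)-sx)}}_A\le\norm{\chi e^{-i(ts)x}}_A\,M_\varphi(t)\le K_\chi M_\varphi(t),
\]
taking $y=ts$ in \eqref{eq:Kchi}.

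The one point needing care, and the main obstacle, is that $e^{-itsx}$ is not $2\pi$-periodic for nonintegral $ts$. The product $\chi(x)e^{-itsx}$ must therefore be defined on $[0,2\pi)$ and then extended periodically. This is legitimate, and the extension is smooth, because $\chi$ vanishes near $0$ and $2\pi$. The factorization is then an identity of periodic functions on $\T$, since $e^{it\varphi}$ is genuinely periodic, and the product of the two periodic extensions agrees with the periodic extension of the product. With this in place, both the coefficient formula and the multiplicativity step are justified.
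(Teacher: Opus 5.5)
Your proof is correct, but it reaches \eqref{eq:Kchi} by a somewhat different route from the paper.

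You compute the Fourier coefficients of $\chi e^{-iyx}$ exactly, as samples $\frac1{2\pi}\widetilde\chi(k+y)$ of the Fourier transform of $\chi$ on $\R$ taken along the shifted lattice $\Z+y$. You get decay $O((1+\xi^2)^{-1})$ from two integrations by parts, and you bound $\sum_k(1+(k+y)^2)^{-1}$ uniformly using its $1$-periodicity in $y$.

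The paper instead writes $y=m+r$ with $m\in\Z$ and $|r|\le1/2$. It uses the fact that multiplication by $e^{-imx}$ is an isometry of $A(\T)$ to replace $y$ by $r$, and then applies the Sobolev bound \eqref{eq:sobolev} to obtain $K_\chi\le C_S(\tfrac32\norm\chi_2+\norm{\chi'}_2)$.

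Your version makes the mechanism fully explicit: the coefficients come from one fixed rapidly decaying function sampled on a moving lattice. It also avoids the embedding. The cost is that it uses $\chi''\in L^1$ rather than only $\chi'\in L^2$, which is immaterial for smooth $\chi$. The paper's version is shorter once \eqref{eq:sobolev} is available. The second step, factorization plus submultiplicativity, is identical in both.

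One correction to your commentary: the Sobolev route is not actually non-uniform. The growth of $\norm{(\chi e^{-iyx})'}_2$ like $|y|$ disappears once you reduce $y$ modulo $\Z$ via the integer-modulation isometry. That is exactly the reduction your own argument performs through the $1$-periodicity of the sum. So your dismissal of that approach is unfounded, although this does not affect the validity of your proof.
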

\begin{proof}
Write $y=m+r$, where $m\in\Z$ and $|r|\le1/2$.
Invariance under integer modulation gives
\[
 \norm{\chi e^{-iyx}}_A=\norm{\chi e^{-irx}}_A.
\]
Since $\chi$ vanishes near the endpoints, the function
$\chi e^{-irx}$ has a smooth periodic extension.
By~\eqref{eq:sobolev}, uniformly in $r$,
\[
 \norm{\chi e^{-irx}}_A
 \le C_S\left(\tfrac32\norm\chi_2+\norm{\chi'}_2\right).
\]
This proves~\eqref{eq:Kchi}.
Inequality~\eqref{eq:modulation-upper} follows by
applying~\eqref{eq:algebra} to the product
$e^{it\varphi}(\chi e^{-itsx})$.
\end{proof}

We do not assume that $e^{-iyx}$ is periodic for every $y\in\R$.
It is the \emph{localized} product that is extended periodically.
The cutoff is precisely what makes the estimate independent of
the possibly arbitrarily large product $ts$.

\section{Phase localization and separation of samples}\label{sec:geometry}

In this section, we assume that the continuous real-valued phase
$\varphi$ is normalized by
\begin{equation}\label{eq:normalization}
 \min\varphi=0,\qquad \max\varphi=1,\qquad
 \varphi(0)=\varphi(2\pi)=1.
\end{equation}
We shall remove this normalization in the proof of
Theorem~\ref{thm:mean}.

By continuity, choose $0<\varepsilon<\pi/4$ so that
$\varphi\ge3/4$ on
$[0,2\varepsilon]\cup[2\pi-2\varepsilon,2\pi]$.
Fix $\chi\in C_c^\infty((0,2\pi))$ such that
$0\le\chi\le1$ and $\chi=1$ on
$[2\varepsilon,2\pi-2\varepsilon]$.
Then
\begin{equation}\label{eq:cutoff-level}
 \chi(x)<1\quad\Longrightarrow\quad\varphi(x)\ge\tfrac34.
\end{equation}
Since the minimum of the phase is zero, there is a nonempty open
interval $J\subset(0,2\pi)$ on which $\varphi<1/4$.
In particular, $\chi=1$ on $J$.
Set
\begin{equation}\label{eq:S}
 S=\left[-\frac1{16\pi},\frac1{16\pi}\right],
 \qquad q_s(x)=\varphi(x)-sx\quad(s\in S).
\end{equation}
For $x\in[0,2\pi]$, we have $|sx|\le1/8$, and hence
\begin{align}
 x\in J&\quad\Longrightarrow\quad q_s(x)<\tfrac38,
       \label{eq:low}\\
 \chi(x)<1&\quad\Longrightarrow\quad q_s(x)\ge\tfrac58.
       \label{eq:high}
\end{align}

\begin{lemma}[Initial coefficients]\label{lem:first-coefficients}
Let $x_j=2\pi j/N$, and suppose that, for some $s\in S$,
the values $q_s(x_j)$ are pairwise distinct.
Arrange them as
$q_s(x_{j_1})<\cdots<q_s(x_{j_N})$, and write
$m_N=\#\{j:x_j\in J\}$.
Then
\begin{equation}\label{eq:first-coefficients}
 \chi(x_{j_r})=1\qquad(1\le r\le m_N).
\end{equation}
Moreover, for all $N\ge8\pi/|J|$,
\begin{equation}\label{eq:rho}
 m_N\ge\rho N,\qquad \rho=\frac{|J|}{4\pi}>0.
\end{equation}
\end{lemma}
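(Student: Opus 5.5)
The lemma has two parts, and I would prove them separately. The first part uses the threshold implications \eqref{eq:low} and \eqref{eq:high}. The second part is an elementary count of lattice points in an interval.

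For \eqref{eq:first-coefficients}, the plan is a counting argument on the sorted values. Let $I=\{j:x_j\in J\}$, so $\#I=m_N$. By \eqref{eq:low}, every $j\in I$ satisfies $q_s(x_j)<3/8$. Now suppose that for some $r\le m_N$ we had $\chi(x_{j_r})<1$. Then \eqref{eq:high} gives $q_s(x_{j_r})\ge5/8$. Since the values are listed in increasing order, every index $j_{r'}$ with $r'\ge r$ also has value at least $5/8$. So none of these indices lies in $I$, and $I\subset\{j_1,\ldots,j_{r-1}\}$. This forces $m_N\le r-1<m_N$, a contradiction. Hence $\chi(x_{j_r})=1$ for all $r\le m_N$.

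The points lie in $[0,2\pi)$, so the estimate $|sx|\le1/8$ used in deriving \eqref{eq:low} and \eqref{eq:high} applies. The grid point $x_0=0$ is not a problem: there $\chi(0)=0<1$ and $\varphi(0)=1$, so \eqref{eq:high} still holds. This uses the periodic extension of $\chi$ only through $\chi(0)=0$, and it is consistent with the normalization \eqref{eq:normalization}. I would also remark that distinctness of the values is needed only so that the ordering $j_1,\dots,j_N$ is well defined.

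For \eqref{eq:rho}, write $J=(\alpha,\beta)$ with length $|J|=\beta-\alpha$. The grid points have spacing $h=2\pi/N$. An open interval of length $\ell$ contains at least $\lfloor \ell/h\rfloor\ge \ell/h-1$ points of $h\Z$. Since $J\subset(0,2\pi)$, all of these have the form $x_j$ with $0\le j<N$. This gives
\[
 m_N\ge \frac{N|J|}{2\pi}-1 .
\]
The hypothesis $N\ge 8\pi/|J|$ means $N|J|/(4\pi)\ge 2\ge 1$. Therefore
\[
 \frac{N|J|}{2\pi}-1\ge \frac{N|J|}{4\pi}=\rho N .
\]

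Neither step is a real obstacle. The only delicate point is the boundary bookkeeping: checking that the estimate $|sx|\le1/8$ covers all grid points, including $x_0=0$ where $\chi$ vanishes.
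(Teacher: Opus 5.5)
Your proof is correct and follows the paper's argument: the first $m_N$ sorted values lie below $3/8$, so by \eqref{eq:high} their $\chi$-values equal one, and a lattice-point count in $J$ gives \eqref{eq:rho}. The paper uses the cruder count $N|J|/(2\pi)-2$, which is exactly what the threshold $8\pi/|J|$ is calibrated to. One small correction to your count: an open interval of length $\ell$ need not contain $\lfloor \ell/h\rfloor$ points of $h\Z$ (for example, $(0,h)$ contains none). It does always contain at least $\lceil \ell/h\rceil-1\ge \ell/h-1$ such points, so your bound $m_N\ge N|J|/(2\pi)-1$ still holds.
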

\begin{proof}
By~\eqref{eq:low}, at least $m_N$ of the values $q_s(x_j)$
are less than $3/8$. Thus the first $m_N$ values in the full
ordering are also less than $3/8$. By~\eqref{eq:high}, the
corresponding values of $\chi$ equal one.
There is no need for the first $m_N$ indices to correspond
exactly to points in $J$.

The number of grid points in the open interval $J$ is at least
$N|J|/(2\pi)-2$. For $N\ge8\pi/|J|$, this
gives~\eqref{eq:rho}.
\end{proof}

The lemma controls the beginning of the ordering, not just the
number of coefficients of modulus one. This distinction is
essential for the sum $\sum_r|z_r|/(r+1)$: a positive proportion
of large coefficients placed only at the final indices would
not yield a logarithmic factor.

\begin{lemma}[Separation by a single parameter]\label{lem:separation}
For every $N\ge2$, there exists $s_N\in S$ such that
\begin{equation}\label{eq:separation}
 |\varphi(x_j)-s_Nx_j-\varphi(x_\ell)+s_Nx_\ell|
 \ge\delta_N:=\frac1{16N^3}\qquad(j\ne\ell).
\end{equation}
\end{lemma}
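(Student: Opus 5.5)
Since a single parameter must separate all pairs at once, we argue by measure (a union bound), not one pair at a time. For each pair $j\ne\ell$, the quantity $\varphi(x_j)-\varphi(x_\ell)-s(x_j-x_\ell)$ is an affine function of $s$ with slope $-(x_j-x_\ell)$. Since $j\ne\ell$ and $0\le j,\ell<N$, we have $|x_j-x_\ell|=2\pi|j-\ell|/N\ge2\pi/N$.

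The key step is to bound the bad set for one pair. Define
\[
 E_{j\ell}=\bigl\{s\in S:\ |\varphi(x_j)-\varphi(x_\ell)-s(x_j-x_\ell)|<\delta_N\bigr\}.
\]
Because an affine function with slope of modulus at least $2\pi/N$ stays in an interval of length $2\delta_N$ only on an $s$-set of length at most $2\delta_N/(2\pi/N)=N\delta_N/\pi$, we get
\[
 |E_{j\ell}|\le\frac{N\delta_N}{\pi}=\frac{1}{16\pi N^2}.
\]

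Next we sum over pairs. There are $\binom N2<N^2/2$ unordered pairs, and $E_{j\ell}=E_{\ell j}$. Hence
\[
 \Bigl|\bigcup_{j\ne\ell}E_{j\ell}\Bigr|<\frac{N^2}{2}\cdot\frac1{16\pi N^2}=\frac1{32\pi}.
\]
This is strictly less than $|S|=1/(8\pi)$. So $S\setminus\bigcup E_{j\ell}$ has positive measure, and any $s_N$ chosen from it satisfies \eqref{eq:separation}. The closedness of $S$ causes no trouble, since we only need the complement of the bad set to be nonempty.

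There is no real obstacle here. The only care needed is to use the lower bound on the grid spacing $|x_j-x_\ell|$ rather than any regularity of $\varphi$, which is merely continuous. The exponent $3$ in $\delta_N$ leaves room to spare: one factor of $N$ comes from the slope and two from the number of pairs.
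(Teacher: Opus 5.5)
Your proof is correct and follows the same union bound over pairs as the paper: you bound each bad $s$-interval by $2\delta_N/|x_j-x_\ell|$ and compare the total with $|S|=1/(8\pi)$. The only cosmetic difference is that you use the worst-case slope $2\pi/N$ and count $\binom N2$ pairs, giving a total below $1/(32\pi)$, whereas the paper keeps the exact length $\delta_N N/(\pi d)$ for pairs at distance $d$ and bounds the sum crudely by $\delta_N N^3/\pi=1/(16\pi)$.
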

\begin{proof}
For $j<\ell$, the parameters for which the inequality with
lower bound $\delta>0$ fails form the open interval
\[
 |\varphi(x_j)-\varphi(x_\ell)-s(x_j-x_\ell)|<\delta.
\]
Its length is
$2\delta/|x_j-x_\ell|=\delta N/(\pi(\ell-j))$.
The sum of the lengths of their intersections with $S$ is
therefore at most
\begin{equation}\label{eq:bad-parameters}
 \frac{\delta N}{\pi}
 \sum_{d=1}^{N-1}\frac{N-d}{d}
 \le\frac{\delta N^3}{\pi}.
\end{equation}
For $\delta=1/(16N^3)$, this bound is $1/(16\pi)$,
whereas $|S|=1/(8\pi)$.
Thus there is a parameter $s_N\in S$ outside the union of all
the bad intervals, and it satisfies~\eqref{eq:separation}.
\end{proof}

No separation of the values $\varphi(x_j)$ is assumed; these
values may repeat. The difference $x_j-x_\ell$ is the ordinary
difference of the representatives in $[0,2\pi)$, not the distance
on the circle. For each $N$, we choose a single parameter $s_N$
and keep it fixed throughout the subsequent integration.

\section{Proof of the logarithmic lower bound for averages}\label{sec:meanproof}

\begin{proof}[Proof of Theorem~\ref{thm:mean}]
We first impose the normalization~\eqref{eq:normalization}.
Fix $\chi$, $J$, and $S$ as in Section~\ref{sec:geometry}.
In particular, $K_\chi$, $\rho$, and the threshold
in~\eqref{eq:rho} are independent of $N$, $T$, and the
shift $a$.

For sufficiently large $N$, choose $s_N$ as in
Lemma~\ref{lem:separation}, put
$q_j=\varphi(x_j)-s_Nx_j$, and define, for $0\le k<N$,
\begin{equation}\label{eq:P}
 P_{k,N}(t)=\sum_{j=0}^{N-1}
       \chi(x_j)e^{-2\pi ikj/N}e^{itq_j}.
\end{equation}
Applying Lemmas~\ref{lem:sampling} and~\ref{lem:modulation} to
\[
 H_{N,t}(x)=\chi(x)e^{it(\varphi(x)-s_Nx)},
\]
we obtain, for every $t\in\R$,
\begin{equation}\label{eq:P-upper}
 \frac1N\sum_{k=0}^{N-1}|P_{k,N}(t)|
 \le\norm{H_{N,t}}_A\le K_\chi M_\varphi(t).
\end{equation}
The inclusion $H_{N,t}\in A(\T)$ follows from the factorization
$H_{N,t}=e^{it\varphi}(\chi e^{-its_Nx})$;
it does not require the function $x\mapsto s_Nx$ itself to
be periodic.

Now arrange the frequencies in increasing order:
$q_{j_1}<\cdots<q_{j_N}$.
Their consecutive gaps are at least $\delta_N=1/(16N^3)$.
The coefficient of the frequency $q_{j_r}$ in~\eqref{eq:P} is
\[
 z_{r,k}=\chi(x_{j_r})e^{-2\pi ikj_r/N},
 \qquad |z_{r,k}|=\chi(x_{j_r}).
\]
Its modulus is independent of $k$.
Lemma~\ref{lem:first-coefficients} gives $|z_{r,k}|=1$
for $1\le r\le m_N$, where $m_N\ge\rho N$.
We may therefore apply Corollary~\ref{cor:JKS-scaled} whenever
\begin{equation}\label{eq:T-threshold}
 T\ge\frac{144\pi}{\delta_N}=2304\pi N^3.
\end{equation}
For every $a\in\R$ and $0\le k<N$, this gives
\begin{align}
 \frac1T\int_a^{a+T}|P_{k,N}(t)|\dd t
 &\ge\frac1{122}\sum_{r=1}^{m_N}\frac1{r+1}\notag\\
 &\ge\frac1{122}\bigl(\log N-C_J\bigr),
       \label{eq:P-lower}
\end{align}
where we may take $C_J=1-\log\rho$.
Indeed, $\sum_{r=1}^{m}1/(r+1)\ge\log(m+2)-1$.

Integrating~\eqref{eq:P-upper} and then
using~\eqref{eq:P-lower} for each $k$, we obtain
\begin{equation}\label{eq:mean-N}
 \mathcal Q_\varphi(a,T)
 \ge\frac1{122K_\chi}(\log N-C_J)
 \qquad(T\ge2304\pi N^3).
\end{equation}
At this step, we sum $N$ lower bounds and divide by $N$;
there is no additional loss depending on the size of the grid.

For any sufficiently large $T$, choose
\[
 N=\left\lfloor\left(\frac{T}{2304\pi}\right)^{1/3}\right\rfloor.
\]
Then~\eqref{eq:T-threshold} holds, and, for large $T$,
\[
 \log N\ge\tfrac13\log T-\tfrac13\log(2304\pi)-\log2.
\]
Consequently,~\eqref{eq:mean-N} yields
\begin{equation}\label{eq:mean-additive}
 \mathcal Q_\varphi(a,T)
 \ge\frac1{366K_\chi}\log T-C_\varphi^{(0)},
\end{equation}
with a constant $C_\varphi^{(0)}$ independent of $a$ and $T$.
Increasing the threshold $T_\varphi$
gives~\eqref{eq:mean-statement}; in the normalized case, we may
take $c_\varphi=1/(732K_\chi)$.

It remains to remove the normalization.
For the original nonconstant $\varphi\in A(\T;\R)$, let
$d=\max\varphi-\min\varphi>0$, and let $x_*$ be a point
where the maximum is attained. The function
\[
 \Phi(x)=\frac{\varphi(x+x_*)-\min\varphi}{d}
\]
belongs to $A(\T)$ and satisfies~\eqref{eq:normalization}.
Translation invariance and invariance under multiplication by
unimodular constants give $M_\varphi(t)=M_\Phi(dt)$, hence
\[
 \mathcal Q_\varphi(a,T)=\mathcal Q_\Phi(da,dT).
\]
Applying~\eqref{eq:mean-additive} to $\Phi$ and using
$\log(dT)=\log T+\log d$, we again obtain a lower bound equal
to a positive multiple of $\log T$ minus a constant.
Absorbing this constant by increasing the threshold completes
the proof. Uniformity in $a$ is preserved.
\end{proof}

\begin{remark}
The constants in the proof depend on the fixed phase through the
choice of the cutoff $\chi$, the length of the interval $J$,
and the initial rescaling. The parameter $s_N$ depends on the
grid, but estimate~\eqref{eq:modulation-upper} is uniform over
all $s,t\in\R$. This uniformity allows us to compare the
sums~\eqref{eq:P} with a single function $M_\varphi$, even
though the auxiliary phase depends on $N$.
\end{remark}

\section{Integer powers and the proof of Kahane's conjecture}
\label{sec:kahaneproof}

\begin{corollary}\label{cor:integer-blocks}
If $\varphi\in A(\T;\R)$ is nonconstant, then there exist
$c'_\varphi>0$ and $N_\varphi\in\N$ such that
\begin{equation}\label{eq:integer-blocks}
 \frac1N\sum_{n=j}^{j+N-1}M_\varphi(n)
 \ge c'_\varphi\log N
 \qquad(j\in\Z,\ N\ge N_\varphi).
\end{equation}
\end{corollary}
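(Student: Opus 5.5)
The plan is to deduce the discrete block average from the continuous average in Theorem~\ref{thm:mean}, using the comparison of integer and real parameters in Lemma~\ref{lem:parameter-comparison}. Set $B=B_\varphi=\max_{|u|\le1}M_\varphi(u)$. For $t\in[n,n+1]$, inequality~\eqref{eq:parameter-comparison} gives $M_\varphi(t)\le B\,M_\varphi(n)$. Integrating over $[j,j+N]$ therefore yields
\[
 \int_j^{j+N}M_\varphi(t)\dd t
 =\sum_{n=j}^{j+N-1}\int_n^{n+1}M_\varphi(t)\dd t
 \le B\sum_{n=j}^{j+N-1}M_\varphi(n).
\]
Dividing by $N$ gives
\[
 \frac1N\sum_{n=j}^{j+N-1}M_\varphi(n)\ge B^{-1}\mathcal Q_\varphi(j,N).
\]

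Next I apply Theorem~\ref{thm:mean} with $a=j$ and $T=N$. This is legitimate as soon as $N\ge T_\varphi$, and the bound is uniform in $a$, hence uniform in $j\in\Z$. It gives $\mathcal Q_\varphi(j,N)\ge c_\varphi\log N$. Consequently~\eqref{eq:integer-blocks} holds with $c'_\varphi=c_\varphi/B_\varphi$ and $N_\varphi=\lceil T_\varphi\rceil$.

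There is essentially no obstacle here; the estimate only needs to be checked in two places. First, $B_\varphi$ must be finite and independent of $j$ and $N$. It is finite because $M_\varphi$ is continuous on $\R$, as noted after~\eqref{eq:M-Q}, and it depends only on $\varphi$. Second, the direction of the comparison must be the right one. We need an upper bound for $M_\varphi(t)$ in terms of $M_\varphi(n)$ at the left endpoint $n$ of each unit interval. This follows from submultiplicativity in~\eqref{eq:M-properties}: $M_\varphi(t)\le M_\varphi(n)M_\varphi(t-n)$ with $0\le t-n\le1$. The uniformity of Theorem~\ref{thm:mean} in the shift $a$ is exactly what makes the bound uniform over arbitrary blocks of consecutive integers.
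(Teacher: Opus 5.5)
Your proposal is correct and is the paper's own argument: bound $M_\varphi(n+u)\le B_\varphi M_\varphi(n)$ by submultiplicativity, integrate over consecutive unit intervals to get $\mathcal Q_\varphi(j,N)\le B_\varphi\cdot\frac1N\sum_{n=j}^{j+N-1}M_\varphi(n)$, and apply the shift-uniform Theorem~\ref{thm:mean} with $c'_\varphi=c_\varphi/B_\varphi$. The only cosmetic difference is that the paper takes the maximum over $0\le u\le1$ rather than $|u|\le1$; these give the same number because $M_\varphi(-t)=M_\varphi(t)$.
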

\begin{proof}
By continuity of $M_\varphi$, the number
$B_\varphi=\max_{0\le u\le1}M_\varphi(u)$ is finite.
For $n\in\Z$ and $0\le u\le1$, submultiplicativity gives
$M_\varphi(n+u)\le B_\varphi M_\varphi(n)$.
Summing the integrals over consecutive unit intervals, we obtain
\[
 \frac1N\int_j^{j+N}M_\varphi(t)\dd t
 \le\frac{B_\varphi}{N}\sum_{n=j}^{j+N-1}M_\varphi(n).
\]
Theorem~\ref{thm:mean} proves the assertion with
$c'_\varphi=c_\varphi/B_\varphi$, after increasing the integer
threshold if necessary.
\end{proof}

\begin{corollary}\label{cor:unimodular-blocks}
Let $F\in A(\T)$ satisfy $|F|=1$, and suppose that $F$ is not
of the form $e^{i(mx+b)}$.
There exist $c_F>0$ and $N_F\in\N$ such that
\begin{equation}\label{eq:unimodular-blocks}
 \frac1N\sum_{n=j}^{j+N-1}\norm{F^n}_A
 \ge c_F\log N
 \qquad(j\in\Z,\ N\ge N_F).
\end{equation}
\end{corollary}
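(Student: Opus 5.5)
The plan is to reduce to Corollary~\ref{cor:integer-blocks} via Lemma~\ref{lem:degree}. Since $F\in A(\T)$ and $|F|=1$, Lemma~\ref{lem:degree} gives $m\in\Z$ and a real-valued $\psi\in A(\T)$ with $F(x)=e^{imx}e^{i\psi(x)}$. By \eqref{eq:power-norms}, $\norm{F^n}_A=M_\psi(n)$ for every $n\in\Z$, including negative $n$. So the block averages in \eqref{eq:unimodular-blocks} coincide term by term with those in \eqref{eq:integer-blocks} for $\varphi=\psi$.

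Next, I would check that $\psi$ is nonconstant. If $\psi\equiv b$, then $F(x)=e^{i(mx+b)}$, which the hypothesis excludes. Hence Corollary~\ref{cor:integer-blocks} applies to $\varphi=\psi$ and yields $c'_\psi>0$ and $N_\psi$ such that the averages over every block $\{j,\dots,j+N-1\}$, $j\in\Z$, are at least $c'_\psi\log N$. Setting $c_F=c'_\psi$ and $N_F=N_\psi$ gives the claim, uniformly in $j$.

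There is no real obstacle here. The only points to verify are that \eqref{eq:power-norms} holds for all integers $n$, which is true because multiplication by $e^{inmx}$ is an isometry for any $n\in\Z$, and that $\psi$ is nonconstant, which is immediate from the contrapositive above. All the analytic content sits in Theorem~\ref{thm:mean} and its integer version, Corollary~\ref{cor:integer-blocks}.
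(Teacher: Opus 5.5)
Your proposal is correct and matches the paper's proof: you factor $F=e^{imx}e^{i\psi}$ via Lemma~\ref{lem:degree}, observe that $\psi$ is nonconstant because $F$ is not of the form $e^{i(mx+b)}$, and apply Corollary~\ref{cor:integer-blocks} through the identity~\eqref{eq:power-norms}. Your remark that~\eqref{eq:power-norms} holds for negative $n$ as well is correct, and it covers blocks with $j<0$.
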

\begin{proof}
By Lemma~\ref{lem:degree}, we have $F=e^{imx}e^{i\psi}$
with a real-valued $\psi\in A(\T)$.
The function $\psi$ is nonconstant, since otherwise $F$ would
have the excluded form.
Now~\eqref{eq:power-norms} and
Corollary~\ref{cor:integer-blocks} give~\eqref{eq:unimodular-blocks}.
\end{proof}

\begin{proof}[Proof of Theorem~\ref{thm:kahane}]
Condition~\eqref{eq:kahane-condition}, restricted to positive $n$,
implies that $F^n\in A(\T)$ for all sufficiently large $n$.
Choosing two consecutive such powers and using $|F|=1$, we obtain
\[
 F=F^{n+1}\overline{F^n}\in A(\T).
\]
Suppose that $F$ is not of the form $e^{i(mx+b)}$.
By Corollary~\ref{cor:unimodular-blocks}, for large $N$,
\begin{equation}\label{eq:contradiction-lower}
 \frac1N\sum_{n=0}^{N-1}\norm{F^n}_A\ge c_F\log N.
\end{equation}
On the other hand, for every $\eta>0$ there exists $n_\eta\ge2$
such that $\norm{F^n}_A\le\eta\log n$ for $n\ge n_\eta$.
Therefore
\[
 \frac1N\sum_{n=0}^{N-1}\norm{F^n}_A
 \le\frac1N\sum_{n=0}^{n_\eta-1}\norm{F^n}_A+\eta\log N
 \qquad(N>n_\eta).
\]
Dividing by $\log N$, letting $N$ tend to infinity, and then
letting $\eta$ tend to zero, we obtain
\[
 \frac1N\sum_{n=0}^{N-1}\norm{F^n}_A=o(\log N),
\]
contrary to~\eqref{eq:contradiction-lower}.
Thus $F(x)=e^{i(mx+b)}$.
\end{proof}

\section{Consequences for homomorphisms and higher-dimensional tori}
\label{sec:consequences}

\subsection{Composition operators and L\'evy's endomorphism problem}

The classification of endomorphisms of $A(\T)$ was a problem of
P.~L\'evy, solved by the Beurling--Helson theorem; see
\cite[Introduction]{KS}. Thus its recovery below is a classical
consequence, not a separate previously open problem.
The quantitative extension concerns the growth allowed for the
norms of restrictions to finite-dimensional spaces of trigonometric
polynomials.

Let $\alpha:\T\to\T$ be continuous, put $F=e^{i\alpha}$,
and suppose $F\in A(\T)$. For $N\ge1$, let
\[
 \mathcal P_N=\operatorname{span}\{e^{inx}:|n|\le N\},
 \qquad
 C_{\alpha,N}:\mathcal P_N\to A(\T),\quad
 C_{\alpha,N}p=p\circ\alpha,
\]
where $\mathcal P_N$ carries the $A(\T)$ norm.
Then
\begin{equation}\label{eq:composition-norm}
 \norm{C_{\alpha,N}}=\max_{|n|\le N}\norm{F^n}_A
 =\Theta_F(N).
\end{equation}
The upper bound is the triangle inequality applied to the Fourier
series of $p$, and the reverse bound follows by taking a single
character. In particular, \eqref{eq:maximum-equivalence} shows
that the rigidity assertion with
$\norm{C_{\alpha,N}}=o(\log N)$ is another equivalent formulation
of Kahane's conjecture.

\begin{corollary}\label{cor:composition-growth}
Let $\alpha:\T\to\T$ be continuous with $e^{i\alpha}\in A(\T)$.
If $\alpha$ is not affine, then there are $c_\alpha>0$ and
$N_\alpha$ such that
\[
 \norm{C_{\alpha,N}}\ge c_\alpha\log N
 \qquad(N\ge N_\alpha).
\]
Consequently, the only continuous maps inducing bounded
composition operators $A(\T)\to A(\T)$ are
$\alpha(x)=mx+b$ modulo $2\pi$.
\end{corollary}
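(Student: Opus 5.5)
The plan is to combine the identity~\eqref{eq:composition-norm} with the block-average lower bound of Corollary~\ref{cor:unimodular-blocks}. Set $F=e^{i\alpha}\in A(\T)$, which is unimodular. If $\alpha$ is not affine modulo $2\pi$, then $F$ is not of the form $e^{i(mx+b)}$, because these two conditions are equivalent by the discussion of lifts in Subsection~\ref{subsec:equivalences}. Corollary~\ref{cor:unimodular-blocks} therefore supplies $c_F>0$ and $N_F$ such that the averages of $\norm{F^n}_A$ over any block of $N$ consecutive integers are at least $c_F\log N$.

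Take the block $n=0,\dots,N-1$. Each term is at most $\max_{|n|\le N}\norm{F^n}_A=\Theta_F(N)$, so
\[
 \Theta_F(N)\ge\frac1N\sum_{n=0}^{N-1}\norm{F^n}_A\ge c_F\log N
 \qquad(N\ge N_F).
\]
By~\eqref{eq:composition-norm}, this gives $\norm{C_{\alpha,N}}\ge c_\alpha\log N$ with $c_\alpha=c_F$ and $N_\alpha=N_F$. Before relying on~\eqref{eq:composition-norm}, I will check both directions. For $p=\sum_{|n|\le N}\widehat p(n)e^{inx}$ we have $p\circ\alpha=\sum\widehat p(n)F^n$, and the triangle inequality gives the upper bound. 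Testing on a single character $e^{inx}$, which has norm one, gives the lower bound.

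For the consequence, suppose $C_\alpha:A(\T)\to A(\T)$, $p\mapsto p\circ\alpha$, is a bounded operator. Since $\norm{e^{ix}}_A=1$, its image $F=C_\alpha(e^{ix})$ lies in $A(\T)$. The restrictions then satisfy $\norm{C_{\alpha,N}}\le\norm{C_\alpha}$ uniformly in $N$, which contradicts the logarithmic growth unless $\alpha$ is affine. Conversely, for affine $\alpha$ the map $C_\alpha$ is composition with $x\mapsto mx+b$. On Fourier series this sends $\sum\widehat p(n)e^{inx}$ to $\sum\widehat p(n)e^{inb}e^{inmx}$, whose $A$-norm is at most $\norm p_A$ because distinct $n$ with the same $nm$ occur only when $m=0$. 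So $C_\alpha$ is bounded, indeed a contraction.

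The argument is short, and the only point that needs care is bookkeeping. Boundedness of the full composition operator has to be translated into membership $F\in A(\T)$ before Corollary~\ref{cor:unimodular-blocks} can be applied. One also has to make sure that ``not affine'' is exactly the hypothesis ``not of the form $e^{i(mx+b)}$'' used in that corollary, with the usual caveat that $\alpha$ is determined only modulo $2\pi$.
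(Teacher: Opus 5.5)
Your proposal is correct and matches the paper's proof: you bound the $j=0$ block average from Corollary~\ref{cor:unimodular-blocks} by $\Theta_F(N)=\norm{C_{\alpha,N}}$ via~\eqref{eq:composition-norm}. You then derive the consequence from the uniform bound on the restrictions of a bounded operator, from $F=C_\alpha(e^{ix})\in A(\T)$, and from contractivity in the affine case. The only difference is that you verify~\eqref{eq:composition-norm} and the average-versus-maximum step explicitly, which the paper leaves implicit.
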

\begin{proof}
By Corollary~\ref{cor:unimodular-blocks} and
\eqref{eq:composition-norm},
\[
 \norm{C_{\alpha,N}}
 \ge\frac1N\sum_{n=0}^{N-1}\norm{F^n}_A
 \ge c_F\log N.
\]
A bounded composition operator has uniformly bounded restrictions
$C_{\alpha,N}$ and sends $e^{ix}$ to $F\in A(\T)$, so $\alpha$
must be affine. Conversely, an affine map induces a contractive
composition operator, as is immediate from the Fourier series.
\end{proof}

\subsection{Homomorphisms from weighted Wiener algebras}

Let $w:\Z\to[1,\infty)$ satisfy $w(0)=1$ and
$w(k+\ell)\le w(k)w(\ell)$. Define the weighted Wiener algebra
\[
 A_w(\T)=\left\{f\in C(\T):\norm f_{A_w}
 :=\sum_{n\in\Z}|\widehat f(n)|w(n)<\infty\right\}.
\]
It is a unital Banach algebra, continuously embedded in $A(\T)$,
and trigonometric polynomials are dense in it.

\begin{corollary}[Sublogarithmic weights]\label{cor:weighted-homomorphisms}
Suppose that $w(n)=o(\log|n|)$ as $|n|\to\infty$.
Every nonzero bounded complex-linear algebra homomorphism
$T:A_w(\T)\to A(\T)$ has the form
\begin{equation}\label{eq:weighted-homomorphism}
 (Tf)(x)=f(mx+b),\qquad m\in\Z,\quad b\in\R.
\end{equation}
In particular, $T$ extends to a contractive endomorphism of
$A(\T)$, and $\norm T_{A_w\to A}=1$.
\end{corollary}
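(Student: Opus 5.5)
The plan is to reduce to the unimodular case through the image of the generator $z(x)=e^{ix}$ and then apply Theorem~\ref{thm:kahane}. Let $T\colon A_w(\T)\to A(\T)$ be a nonzero bounded homomorphism.

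First, $T(1)$ is a nonzero idempotent in $A(\T)$. By Lemma~\ref{lem:spectrum} it takes only the values $0$ and $1$, so by continuity and connectedness $T(1)=1$.

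Next put $F=T(z)$, where $z(x)=e^{ix}$. Since $z$ is invertible in $A_w$, $F$ is invertible in $A(\T)$ with $F^{-1}=T(z^{-1})$. For $n\in\Z$,
\[
 \norm{F^n}_A=\norm{T(z^n)}_A\le\norm T\,w(n)=o(\log|n|).
\]
Assertion~(a) of Proposition~\ref{prop:operator-equivalences}, which follows from Theorem~\ref{thm:kahane}, then gives $F(x)=ce^{imx}$ with $|c|=1$ and $m\in\Z$. Write $c=e^{ib}$, so that $F(x)=e^{i(mx+b)}$. If one prefers to avoid (a), the same conclusion comes directly: the spectral radius formula gives $r_A(F)\le1$ and $r_A(F^{-1})\le1$, hence $|F|=1$ by Lemma~\ref{lem:spectrum}, and Theorem~\ref{thm:kahane} applies.

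With $F$ identified, $T(z^n)=F^n=e^{in(mx+b)}$ for all $n\in\Z$. This matches the composition $f\mapsto f\circ\alpha$ with $\alpha(x)=mx+b$ on trigonometric polynomials, by linearity. Both $T$ and this composition map are bounded from $A_w$ to $A(\T)$. The composition map is bounded because
\[
 \norm{f\circ\alpha}_A\le\sum_n|\widehat f(n)|\le\norm f_{A_w},
\]
using $w\ge1$. Since trigonometric polynomials are dense in $A_w$, the two maps agree everywhere, which gives~\eqref{eq:weighted-homomorphism}. When $m\ne0$ the Fourier coefficients of $f\circ\alpha$ are those of $f$ placed at distinct frequencies $mn$ and multiplied by unimodular factors, so $\norm{f\circ\alpha}_A=\norm f_A$. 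When $m=0$, $f\circ\alpha$ is a constant bounded by $\norm f_A$. In either case the extension to $A(\T)$ is contractive.

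For the norm, $\norm{Tf}_A\le\norm f_A\le\norm f_{A_w}$, and equality holds at $f=1$ because $w(0)=1$. Hence $\norm T_{A_w\to A}=1$.

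The only substantive step is the application of Kahane's theorem. The key point there is that the two-sided growth bound in the weight gives both $|F|\le1$ and $|F^{-1}|\le1$; one-sided control would not force unimodularity. Everything else is routine density and bookkeeping.
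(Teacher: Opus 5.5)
Your proof is correct and follows the paper's argument essentially step by step: $T1=1$ by connectedness, $F=Tz$ is invertible with $\norm{F^n}_A\le\norm T\,w(n)$, Proposition~\ref{prop:operator-equivalences}(a) (via Theorem~\ref{thm:kahane}) gives $F=e^{i(mx+b)}$, and density together with $\norm{f\circ\alpha}_A\le\norm f_A\le\norm f_{A_w}$ finishes the argument. Your alternative spectral-radius route is the paper's proof of (a) written out inline, and your isometry check for $m\ne0$ spells out the contractivity that the paper simply asserts.
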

\begin{proof}
The continuous function $T1$ is idempotent and hence is constant,
since $\T$ is connected. As $T$ is nonzero, $T1=1$.
Put $z(x)=e^{ix}$ and $F=Tz$. Then $F$ is invertible with inverse
$T(z^{-1})$, and for every $n\in\Z$,
\[
 \norm{F^n}_A=\norm{T(z^n)}_A\le\norm T\,w(n).
\]
Proposition~\ref{prop:operator-equivalences}(a), now established
by Theorem~\ref{thm:kahane}, gives $F(x)=e^{i(mx+b)}$.
Multiplicativity proves~\eqref{eq:weighted-homomorphism} for
trigonometric polynomials. Density and the inequality
\[
 \norm{f(mx+b)}_A\le\norm f_A\le\norm f_{A_w}
\]
extend it to all of $A_w(\T)$.
The same formula defines the claimed extension to $A(\T)$.
Finally, contractivity and $T1=1$ give $\norm T=1$.
\end{proof}

For example, the weights
\[
 w_\beta(n)=\bigl(1+\log(1+|n|)\bigr)^\beta,
 \qquad 0\le\beta\le1,
\]
are submultiplicative. Indeed,
$1+|k+\ell|\le(1+|k|)(1+|\ell|)$ and
$1+a+b\le(1+a)(1+b)$ for $a,b\ge0$.
Corollary~\ref{cor:weighted-homomorphisms} applies when
$0\le\beta<1$. The logarithmic boundary cannot be included:
for $\alpha(x)=|x|$ on $[-\pi,\pi]$, interpreted modulo $2\pi$,
Example~\ref{ex:sharpness} gives
\[
 \norm{e^{in\alpha}}_A\le Cw_1(n)\qquad(n\in\Z).
\]
Consequently the series
$\sum_n\widehat f(n)e^{in\alpha}$ converges in $A(\T)$ for
every $f\in A_{w_1}(\T)$ and defines the non-affine bounded
homomorphism $f\mapsto f\circ\alpha$ into $A(\T)$.
Thus the distinction between sublogarithmic and logarithmic
weights is sharp for this class of homomorphisms.

\subsection{Higher-dimensional tori}

The restriction argument for passing from the circle to a
finite-dimensional torus is standard; it is given for Lebedev's
quantitative theorem in~\cite[Remark~2]{LebedevBH}.
We include the details because the same argument also transfers
our stronger bounds for averages.

For $d\ge1$, equip $A(\T^d)$ with the norm
$\norm H_{A(\T^d)}=\sum_{k\in\Z^d}|\widehat H(k)|$.
As in the one-dimensional case, we set $\norm H_{A(\T^d)}=+\infty$
for $H\in C(\T^d)\setminus A(\T^d)$.
If all but one coordinate are fixed, restriction is contractive:
\begin{equation}\label{eq:coordinate-restriction}
 \norm{H(y_1,\ldots,y_{j-1},\,\cdot\,,y_{j+1},\ldots,y_d)}_{A(\T)}
 \le\norm H_{A(\T^d)}.
\end{equation}
To see this, sum the absolutely convergent Fourier series over
the fixed coordinates first and apply the triangle inequality
to each resulting Fourier coefficient.

\begin{corollary}\label{cor:higher-torus}
Let $F\in C(\T^d)$ be unimodular. If
\[
 \norm{F^n}_{A(\T^d)}=o(\log|n|)
 \qquad(|n|\to\infty,\ n\in\Z),
\]
then
\begin{equation}\label{eq:torus-character}
 F(x)=e^{i(\langle m,x\rangle+b)}
 \quad\text{for some }m\in\Z^d,\ b\in\R.
\end{equation}
More strongly, if $F\in A(\T^d)$ is unimodular and is not of
this form, then there exist $c_F>0$ and $N_F$ such that
\begin{equation}\label{eq:torus-blocks}
 \frac1N\sum_{n=j}^{j+N-1}\norm{F^n}_{A(\T^d)}
 \ge c_F\log N\qquad(j\in\Z,\ N\ge N_F).
\end{equation}
\end{corollary}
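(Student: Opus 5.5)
The plan is to reduce to the circle by restricting to lines parallel to the coordinate axes, using the contractivity \eqref{eq:coordinate-restriction}. The block estimate \eqref{eq:torus-blocks} is the substantive part; the character conclusion \eqref{eq:torus-character} then follows exactly as in the proof of Theorem~\ref{thm:kahane}.

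\emph{Step 1: getting into $A(\T^d)$.} The growth hypothesis gives $F^n\in A(\T^d)$ for all large $n$. Taking two consecutive such powers, $F=F^{n+1}\overline{F^n}\in A(\T^d)$. So it suffices to prove \eqref{eq:torus-blocks} for unimodular $F\in A(\T^d)$ not of the form \eqref{eq:torus-character}. Indeed, the averaging argument at the end of the proof of Theorem~\ref{thm:kahane} shows that $o(\log|n|)$ growth forces $\frac1N\sum_{n<N}\norm{F^n}=o(\log N)$, which contradicts \eqref{eq:torus-blocks}.

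\emph{Step 2: a non-affine one-dimensional slice exists.} Let $F\in A(\T^d)$ be unimodular. For each $j$ and each $y$, the slice $F_{j,y}(u)=F(y_1,\dots,u,\dots,y_d)$ is unimodular and lies in $A(\T)$ by \eqref{eq:coordinate-restriction}. Suppose every slice were of the form $e^{i(m x+b)}$. We lift $F$ to a continuous phase on $\R^d$ (possible since $\R^d$ is simply connected), $F=e^{i\Theta}$. Then for each $j$ and $y$, the map $u\mapsto\Theta(\dots,u,\dots)$ is affine in $u$, with slope $m_j(y)\in\Z$. The slope is an integer depending continuously on $y$ (it is a difference quotient of a continuous function), hence constant on the connected space. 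So $\partial_j\Theta=m_j$ holds in the sense that $\Theta$ is affine in each variable separately with constant slopes. Then $\Theta(x)-\langle m,x\rangle$ is constant in each variable separately, hence constant. This gives \eqref{eq:torus-character}. Therefore, if $F$ is not of this form, some slice $G=F_{j,y_0}$ is a unimodular element of $A(\T)$ that is not $e^{i(mx+b)}$.

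\emph{Step 3: transfer of the averages.} Since restriction is multiplicative, $(F^n)_{j,y_0}=G^n$, and by \eqref{eq:coordinate-restriction}, $\norm{G^n}_{A(\T)}\le\norm{F^n}_{A(\T^d)}$ for all $n\in\Z$. Negative $n$ cause no trouble because $F^{-n}=\overline{F^n}$ and conjugation is an isometry. Corollary~\ref{cor:unimodular-blocks} applied to $G$ then gives \eqref{eq:torus-blocks} with $c_F=c_G$ and $N_F=N_G$, uniformly in $j\in\Z$.

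The only step that needs care is Step 2, namely the argument that being separately affine with integer slopes forces global affinity. I would phrase it via continuity of the integer slopes $m_j(y)=\bigl(\Theta(y+2\pi e_j)-\Theta(y)\bigr)/2\pi$ as functions of $y\in\R^d$. This is automatic, so I expect no real obstacle.
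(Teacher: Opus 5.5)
Your proof is correct and follows the paper's route. You restrict contractively to coordinate circles, glue by showing that the integer slopes of affine slices are constant so that $F e^{-i\langle m,x\rangle}$ is constant, and apply Corollary~\ref{cor:unimodular-blocks} to a non-affine slice. The differences are cosmetic: you derive the first assertion from the block estimate via the averaging step of Theorem~\ref{thm:kahane} rather than applying that theorem slice by slice, and you get continuity of the slopes from a global lift $\Theta$ on $\R^d$ instead of the paper's local constancy of the degree on $\T^{d-1}$.
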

\begin{proof}
Under the first hypothesis, two consecutive large powers show
that $F\in A(\T^d)$. Every coordinate-circle restriction then
satisfies the hypothesis of Theorem~\ref{thm:kahane}, by
\eqref{eq:coordinate-restriction}.
It remains to check that if every such restriction is a scalar
multiple of a character, then $F$ has the global
form~\eqref{eq:torus-character}.

Fix a coordinate index $j$ and denote the remaining coordinates by $y$.
Write its restriction as $F_y(t)=c(y)e^{im_j(y)t}$.
Its degree $m_j(y)$ is locally constant in $y$: uniformly close
unimodular functions have the same degree, because their quotient
has a continuous argument with values in $(-\pi,\pi)$.
Uniform continuity of $F$ and connectedness of $\T^{d-1}$ show
that $m_j(y)$ is independent of $y$; denote it by $m_j$.
It follows that $F(x)e^{-i\langle m,x\rangle}$ is independent
of every coordinate and is therefore constant. This proves the
first assertion.

The same observation shows that any $F$ not of the
form~\eqref{eq:torus-character} has a coordinate-circle restriction
$G$ that is not a scalar multiple of a character. Apply
Corollary~\ref{cor:unimodular-blocks} to $G$ and use
$\norm{G^n}_{A(\T)}\le\norm{F^n}_{A(\T^d)}$ term by term.
\end{proof}

Similarly, for a nonconstant $\psi\in A(\T^d;\R)$, at least
one coordinate-circle restriction is nonconstant. Applying
Theorem~\ref{thm:mean} to that restriction proves
\[
 \frac1T\int_a^{a+T}\norm{e^{it\psi}}_{A(\T^d)}\dd t
 \ge c_\psi\log T\qquad(a\in\R,\ T\ge T_\psi).
\]
Applying Corollary~\ref{cor:higher-torus} componentwise also
shows that a continuous map $\alpha:\T^d\to\T^r$ whose
coordinate functions satisfy
$\norm{e^{in\alpha_\ell}}_{A(\T^d)}=o(\log|n|)$ for
$1\le\ell\le r$ is affine:
$\alpha(x)=Mx+b$ modulo $2\pi$, with an integer $r\times d$
matrix $M$. Via convolution, the analogue of
Proposition~\ref{prop:operator-equivalences}\textup{(c)}, with the same
hypotheses, holds on $\ell^1(\Z^d)$, with conclusion $U=cS_m$,
$|c|=1$, and $m\in\Z^d$.

\subsection{Rigidity from a sequence of long averaging intervals}

The uniformity in the position of the interval yields a
strengthening not contained in the elementary equivalences of
Section~\ref{sec:algebra}.
If $F\in A(\T)$ is unimodular and there are
$j_k\in\Z$ and integers $N_k\to\infty$ such that
\begin{equation}\label{eq:small-blocks}
 \frac1{N_k\log N_k}
 \sum_{n=j_k}^{j_k+N_k-1}\norm{F^n}_A\longrightarrow0,
\end{equation}
then $F$ is a scalar multiple of a character.
Indeed, otherwise Corollary~\ref{cor:unimodular-blocks} bounds
the left-hand side below by $c_F>0$ for all sufficiently large
$k$. No restriction on the positions $j_k$ is needed.
The analogous continuous-parameter condition
\[
 \frac1{T_k\log T_k}\int_{a_k}^{a_k+T_k}M_\psi(t)\dd t
 \longrightarrow0,\qquad T_k\to\infty,
\]
forces a real $\psi\in A(\T)$ to be constant, by
Theorem~\ref{thm:mean}. These conclusions concern long blocks,
not merely an arbitrary sparse sequence of individual powers.

\section{Sharpness of the threshold and scope of the conclusions}\label{sec:remarks}

\subsection{Why big \texorpdfstring{$O$}{O} is not enough}

Kahane's classical examples~\cite[Ch.~VI, \S2, pp.~75--76]{Kahane} show that
$o(\log|n|)$ in Theorem~\ref{thm:kahane} cannot be replaced by
$O(\log|n|)$. For completeness, we reproduce the calculation for Kahane's
tent phase $x\mapsto|x|$.

\begin{example}\label{ex:sharpness}
Identify the circle with $[-\pi,\pi]$ with its endpoints
identified, and set $F(x)=e^{i|x|}$.
Then $F\in A(\T)$, $|F|=1$, $F$ is not a scalar multiple of
a character, and
\begin{equation}\label{eq:sharpness}
 \norm{F^n}_A\asymp\log(|n|+2)\qquad(n\in\Z).
\end{equation}
\end{example}
\begin{proof}
The function $F$ is periodic and belongs to $W^{1,2}(\T)$,
and hence to $A(\T)$ by~\eqref{eq:sobolev}.
For $n\ge1$, direct integration gives
\begin{equation}\label{eq:triangle-coefficients}
 \widehat{F^n}(k)=
 \begin{cases}
 \dfrac12,& k=\pm n,\\[5pt]
 \dfrac{n\bigl((-1)^{n+k}-1\bigr)}{\pi i(n^2-k^2)},
       & k\ne\pm n.
 \end{cases}
\end{equation}
Thus the nonzero coefficients other than those at $k=\pm n$
have modulus $2n/(\pi|n^2-k^2|)$.
For $0\le k\le2n$, $k\ne n$, this is at most
$2/(\pi|n-k|)$. Summation and the symmetry
$k\leftrightarrow-k$ give a bound of $O(\log(n+2))$.
The sum over $|k|>2n$ is $O(1)$, since the coefficients are
bounded by $Cn/k^2$.

For the lower bound, it suffices to take $k=n-d$, where
$1\le d\le n$ is odd. The corresponding coefficient has modulus
\[
 \frac{2n}{\pi d(2n-d)}\ge\frac1{\pi d}.
\]
The sum over such $d$ has order $\log(n+2)$ for large $n$.
Adjusting the constant for the finitely many remaining positive
integers gives~\eqref{eq:sharpness} for $n\ge1$.
For $n<0$, use $\norm{F^{-n}}_A=\norm{F^n}_A$;
for $n=0$, use $\norm1_A=1$.
Finally, the argument of $F$ has different slopes on
$(-\pi,0)$ and $(0,\pi)$, so $F$ is not of the form $e^{i(mx+b)}$.
\end{proof}

\subsection{Averages versus pointwise estimates}

For a nonconstant real-valued $\varphi\in A(\T)$,
Theorem~\ref{thm:mean} with $a=T$ gives
\[
 \max_{T\le t\le2T}M_\varphi(t)\ge c_\varphi\log T
 \qquad(T\ge T_\varphi).
\]
Similarly,~\eqref{eq:unimodular-blocks} implies that, for a
unimodular $F\in A(\T)$ that is not a scalar multiple of a
character,
\[
 \limsup_{|n|\to\infty}\frac{\norm{F^n}_A}{\log|n|}>0.
\]
This suffices to settle Kahane's conjecture, whose assumption
requires slow growth of the powers $F^n$ for \emph{all}
sufficiently large $|n|$.
A lower bound for averages should not, however, be identified
with the inequality $M_\varphi(t)\ge c_\varphi\log t$ for every
large $t$. The argument above does not determine whether there
are sparse sequences of parameters along which the norms remain
small, nor does it settle the question of whether
$M_\varphi(t)\to\infty$ for every nonconstant
$\varphi\in A(\T;\R)$.

This last distinction has an explicit precedent in the literature.
Lebedev~\cite[Remark~5]{LebedevBH} asks about bounded norms along
an unbounded sequence of integer powers of a nonlinear continuous
circle map, and proves that such a sequence is impossible when
the map is absolutely continuous. Thus qualitative divergence in
the absolutely continuous, and in particular $C^1$, case is
already known. A logarithmic lower bound for \emph{each}
sufficiently large parameter is a different assertion, and is
not proved here even for arbitrary nonconstant $C^1$ phases.

\subsection{Related problems that require additional arguments}

The homomorphism and torus consequences above are direct
consequences of the theorem and its averaging estimate.
They should not be confused with solutions of other extremal
problems that occur in the same literature.

First, the Littlewood problem modulo a prime, studied in
\cite{GK,KS}, concerns arbitrary subsets of a finite cyclic
group and lower bounds for the norms of their indicator
functions. Our sampling argument uses finite groups, but its
conclusion controls powers of a \emph{fixed continuous phase},
with constants depending on that phase. It supplies no uniform
lower bound for arbitrary indicators on groups of increasing
prime order. Moreover, the non-harmonic Littlewood inequality
\cite{JKS} is an input to the proof, not a new consequence of it.
In particular, the proof does not resolve the sharper
Dirichlet-kernel extremal question stated as Question~A
in~\cite{JKS}.

Second, one cannot transfer the rigidity statement to a weaker
norm simply from a norm comparison. For example, let $U(\T)$
denote the space of continuous functions with uniformly convergent
Fourier series, with
\[
 \norm f_{U(\T)}=\sup_{N\ge0}\norm{S_N f}_\infty,
 \qquad S_N f(x)=\sum_{|k|\le N}\widehat f(k)e^{ikx}.
\]
Although $\norm f_{U(\T)}\le\norm f_A$ for $f\in A(\T)$,
a lower bound in $A(\T)$ gives no lower bound in $U(\T)$.
Indeed, Lebedev~\cite{LebedevUniform} recalls the existence of
non-affine circle maps with bounded $U(\T)$ norms of their
powers, while proving logarithmic growth in that norm for
non-affine piecewise linear maps. This provides a concrete
boundary to the scope of the present argument.

Finally, the bound $\norm{F^n}_A=O(\log|n|)$ is compatible with
our lower bounds and so is not a rigidity hypothesis of the kind
proved here. Example~\ref{ex:sharpness} establishes sharpness of
the little-$o$ threshold, but neither it nor the averaging proof
classifies all maps with logarithmically bounded powers.

\section*{Use of artificial intelligence}
This work was supported by OpenAI through the ChatGPT for Academic Researchers program.
AI-assisted tools were used during the preparation of this manuscript for mathematical verification and editorial support. The author has independently verified the content and takes full responsibility for its accuracy and conclusions.

\begingroup
\small
\setlength{\bibsep}{5pt}

\endgroup
\end{document}